\newtheorem{theorem}{Theorem}[section]
\newtheorem{corollary}[theorem]{Corollary}
\newtheorem{lemma}[theorem]{Lemma}
\newtheorem{prop}[theorem]{Proposition}
\newtheorem{definition}[theorem]{Definition}
\newtheorem{example}[theorem]{Example}
\newtheorem{remark}[theorem]{Remark}
\newtheorem*{theorem*}{Theorem}
\newcommand{\op}{\overline{\mathbb P}_n}
\newcommand{\up}{\underline{\mathbb P}_n}
\newcommand{\st}{{\rm {St}}}
\newcommand{\St}{{\rm {St}}}
\newcommand{\lk}{{\rm {Lk}}}
\newcommand{\su}{\subseteq}
\newcommand{\pa}{\partial}
\newcommand{\Z}{{\mathbb Z}}
\renewcommand{\i}{{\hat i}}
\newcommand{\E}{{\mathcal E}}
\newcommand{\M}{{\mathcal M}}
\newcommand{\U}{{\mathcal U}}
\newcommand{\R}{\Bbb R}
\newcommand{\N}{{\mathbb N}}
\newcommand{\cc}{{\frak c}}
\newcommand{\tc}{{\sf TC}}
\newtheorem{lema}{Lemma}
\newtheorem{teo}[lema]{Theorem}
\theoremstyle{remark}
\theoremstyle{definition}
\newtheorem{ej}[lema]{Example}
\title[Large simplicial complexes]{Large simplicial complexes: \\  Universality, Randomness, and Ampleness}
\author{Michael Farber}
\date{\today}                                           
\begin{document}

\begin{abstract}{The paper surveys recent progress in understanding geometric, topological and combinatorial properties of large simplicial complexes, focusing mainly on ampleness, connectivity and universality  \cite{EZFM}, \cite{FM}, \cite{Rado}. 
In the first part of the paper we concentrate on $r$-ample 
simplicial complexes which are high dimensional analogues of the $r$-e.c. graphs introduced originally 
by Erd\H os and R\'eniy \cite{ER}, see also \cite{Bonato}. The class of $r$-ample complexes is useful for applications since these complexes allow extensions of subcomplexes of certain type in all possible ways; besides, $r$-ample complexes exhibit remarkable robustness properties.
We discuss results about the existence of $r$-ample complexes and describe their probabilistic and deterministic constructions. The properties of random simplicial complexes in medial regime \cite{FM} 
are important for this discussion since these complexes are ample, in certain range. We prove that the topological complexity of a random simplicial complex in the medial regime satisfies $\tc(X)\le 4$, with probability tending to $1$ as $n\to\infty$. 
There exists a unique (up to isomorphism) $\infty$-ample complex on countable set of vertexes (the Rado complex), and the second part of the paper surveys the results about universality, homogeneity, indestructibility and other important properties of this complex.
 The Appendix written by J.A. Barmak discusses connectivity of conic and ample complexes. 
}\end{abstract}

\maketitle
\section{Introduction}

Network science uses large simplicial complexes for modelling complex networks consisting of an enormous number of interacting objects. The pairwise interactions can be modelled by a graph, but the higher order interactions between the objects require the language of simplicial complexes, see \cite{Bat}.

 In this survey article we discuss $r$-ample simplicial complexes representing {\lq\lq stable and resilient\rq\rq} networks, in the sense that small alterations of the network have limited impact on its global properties (such as connectivity and high connectivity). We also discuss a remarkable simplicial complex $X$ ({\it the Rado complex}) which is 
{ \lq\lq totally  indestructible\rq\rq } in the following sense: 
removing any finite number of simplexes of $X$ leaves a simplicial complex isomorphic to $X$. 
The complex $X$ has infinite (countable) number of vertexes and cannot be practically implemented. 
The $r$-ample simplicial complexes can be viewed as finite approximations to the Rado complex, 
they retain a limited degree of 
indestructibility. The formal definition of $r$-ampleness requires the existence of all possible extensions of simplicial subcomplexes of  size at most $r$.

A related mathematical object is {\it the medial regime random simplicial complex} \cite{FM}, which is $r$-ample with probability tending to one. Informally, the Rado complex can be viewed as a limit of the random simplicial complex in the medial regime. The geometric realisation of the Rado complex is homeomorphic to the infinite dimensional simplex and hence it is contractible.
It was proven in  \cite{FM} that the medial regime random simplicial complex is simply connected and 
has vanishing Betti numbers in dimensions below $\log_2 \log_2 n.$ For these reason one expects that any $r$-ample simplicial complexes is highly connected, for large $r$. 
This question is discussed below in \S \ref{sec:6} and in the Appendix. 
 
Analogues of the ampleness property of \cite{EZFM} have been studied in literature for graphs, hypergraphs, tournaments, and other structures, in combinatorics and in mathematical logic, and
a variety of terms have been used: 
$r$-existentially completeness, $r$-existentially closeness, {$r$-e.c.}~for short,  (see \cite{Che}, \cite{Bonato})
and also the {Adjacency Axiom $r$} (see \cite{BH, BH1}),  an {extension property} \cite{Fa}, 
{property $P(r)$} \cite{Bo}. This property intuitively means that you can get anything you want, for this reason it is also referred to as the 
{Alice's Restaurant Axiom} \cite{Spe}, \cite{Win}.

The main theme of this paper is universality and its relation to randomness, in the realm of simplicial complexes. Speaking about universality one should certainly mention the Urysohn metric space $\U$, a remarkable mathematical object constructed 
by P.S. Urysohn in 1920's. 
The space $\U$ is  universal in the sense that it contains an isometric copy of any complete, separable metric space. Additionally, the Urysohn space $\U$ is homogeneous in the sense that any partial isometry between its finite subsets can be extended to a global isometry. The properties of universality and homogeneity determine $\U$ uniquely up to isometry. A.M. Vershik \cite{V} defines the notion of a random metric space and proves that such space with probability 1 is isomorphic to the Urysohn universal metric space.  

{\it The Rado graph} $\Gamma$ is another notable mathematical  object, which can also be characterised by its universality and homogeneity, see \cite{C}, \cite{C1}. 
The graph $\Gamma$ has countably many vertexes, and it is universal in the sense that any graph with countably many vertexes is isomorphic to an induced subgraph of $\Gamma$. Moreover,  any isomorphism between finite induced subgraphs of $\Gamma$ can be extended to the whole of $\Gamma$ (homogeneity). The properties of universality and homogeneity determine $\Gamma$ uniquely up to isomorphism. 
Erd\H os and R\'enyi \cite{ER} showed that a random graph on countably many vertexes 
is isomorphic to $\Gamma$ with probability 1; this result explains why $\Gamma$ is  sometimes called {\it \lq\lq the random graph\rq\rq}. 
Rado \cite{Ra} suggested a deterministic construction of $\Gamma$ in which the vertexes $V(\Gamma)$ are 
labelled by integers $\N$ and a pair of vertexes labelled by $m<n$ are connected by an edge iff the $m$-th digit in the binary expansion of $n$ is $1$. This same graph construction implicitly appeared in an earlier paper by W. Ackermann \cite{Ack}, who studied the consistence of the axioms of set theory. 


The Rado simplicial complex $X$ introduced in \cite{Rado} can also be characterised by universality and homogeneity and we also know that a random simplicial complex on countably many vertexes (in a certain regime) is isomorphic to $X$ with probability 1. 
One observes several curious properties of $X$, for example if the set of vertexes of $X$ is partitioned into finitely many parts, the simplicial complex induced on at least one of these parts is isomorphic to $X$. The link of any simplex of $X$ is isomorphic to $X$. 
One of the key properties of $X$ is its {\it indescructibility}: removing any finite set of simplexes leaves a simplicial complex isomorphic to $X$.

The main source for the present survey are the papers \cite{EZFM}, \cite{FM}, \cite{Rado}. Next we comment on the other related publications. 

Theorem 3  of R. Rado \cite{Ra} suggests a construction of a universal uniform hypergraph of a fixed dimension $\ell$. 
Equivalently, uniform hypergraphs can be understood as simplicial complexes of a fixed dimension 
$\ell$ having complete $(\ell-1)$-dimensional skeleta. 

In \cite{BH} Blass and Harary study the 0-1 law for the first order language of simplicial complexes
of fixed dimension $\ell$ with respect to the counting probability measure. They show that a typical 
$\ell$-dimensional simplicial complex has a full $(\ell-1)$-skeleton. In \cite{BH}, the authors introduce \lq\lq Axiom $n$\rq\rq,  which 
generalises the characteristic property of the Rado graph;  
it is a special case of our notion of ampleness.

The preprint \cite{BTT} of A. Brooke-Taylor and D. Tesla
applies the methods of mathematical logic and model theory to study the geometry of simplicial complexes. A well-known general construction of model theory is {\it the Fra\" \i ss\'e limit} for a class of relational structures possessing certain amalgamation properties, see \cite{WH}. 
The Fra\" \i ss\'e limit construction, when applied to the class of all finite simplicial complexes, produces a simplicial complex 
$F$ on countably many vertexes which is universal and homogeneous, i.e. it is a Rado complex. 
Therefore, the approach of \cite{BTT} offers an interesting different viewpoint on the Rado complex. 
In \cite{BTT} the authors study the group of automorphisms of $F$ and state that any direct limit of finite groups and any metrisable profinite group embeds into the group of automorphisms of $F$. Besides, \cite{BTT} contains a proof that the geometric realisation of $F$ is homeomorphic to an infinite-dimensional simplex.
The authors of \cite{BTT} also consider a probabilistic approach and claim the 0-1 law for first order theories.

\section{Ample simplicial complexes}

We use the following notations. The symbol $V(X)$ denotes the set of vertices of a simplicial complex $X$. 
If $U\subseteq V(X)$ is a subset we denote by $X_U$ the {\it induced subcomplex} on $U$, i.e., $V(X_U)=U$ and a set of points of $U$ forms a simplex in $X_U$ if and only if it is a simplex in $X$. 

An {\it embedding} of a simplicial complex $A$ into $X$ is an isomorphism between $A$ and an induced subcomplex of $X$.

 The \emph{join} of simplicial complexes $X$ and $Y$ is denoted $X \ast Y$; recall that the set of vertexes of the join is $V(X)\sqcup V(Y)$ and the simplexes of the join are simplexes of the complexes $X$ and $Y$ as well as simplexes of the form
 $\sigma\tau=\sigma\ast\tau$ where $\sigma$ is a simplex in $X$ and $\tau$ is a simplex in $Y$. 
 The simplex $\sigma\tau=\sigma\ast\tau$ has as its vertex set the union of vertexes of $\sigma$ and of $\tau$. 
 The symbol $CX$ stands for the \emph{cone} over $X$. For a vertex $v\in V(X)$ the symbol $\lk_X(v)$ denotes the {\it link} of $v$ in $X$, i.e., the subcomplex of $X$ formed by all simplexes $\sigma$ which do not contain $v$ but such that $\sigma v=\sigma\ast v$ is a simplex of $X$. 
 
 Besides, the symbol $F(X)$ denotes the set of all simplexes of $X$ and $E(X)$ denotes the set of all external simplexes of $X$, i.e. such that $\sigma\notin X$ and $\partial \sigma\subset X$. 

The following definition was introduced in \cite{EZFM}:

\begin{definition}
\label{first}
\label{def:ample}
Let $r\ge 1$ be an integer. A nonempty simplicial complex $X$ is said to be {\it $r$-ample} if for each subset $U\subseteq V(X)$ with $|U|\le r$ and for each subcomplex $A\subseteq X_U$ there exists a vertex $v\in V(X) - U$ such that 
\begin{eqnarray}\label{11a}\label{link}
{\lk}_X(v)\cap X_U \;=\; A.
\end{eqnarray} 
We say that $X$ is {\it ample or $\infty$-ample} if it is $r$-ample for every $r\ge 1$. 
\end{definition}

The condition (\ref{11a}) 
can equivalently be expressed as 
$X_{U\cup \{v\}} \;=\; X_U \cup (v\ast A). $
This is illustrated by Figure \ref{fig:xua}. 
\begin{figure}[h]
    \centering
    \includegraphics[scale = 0.35]{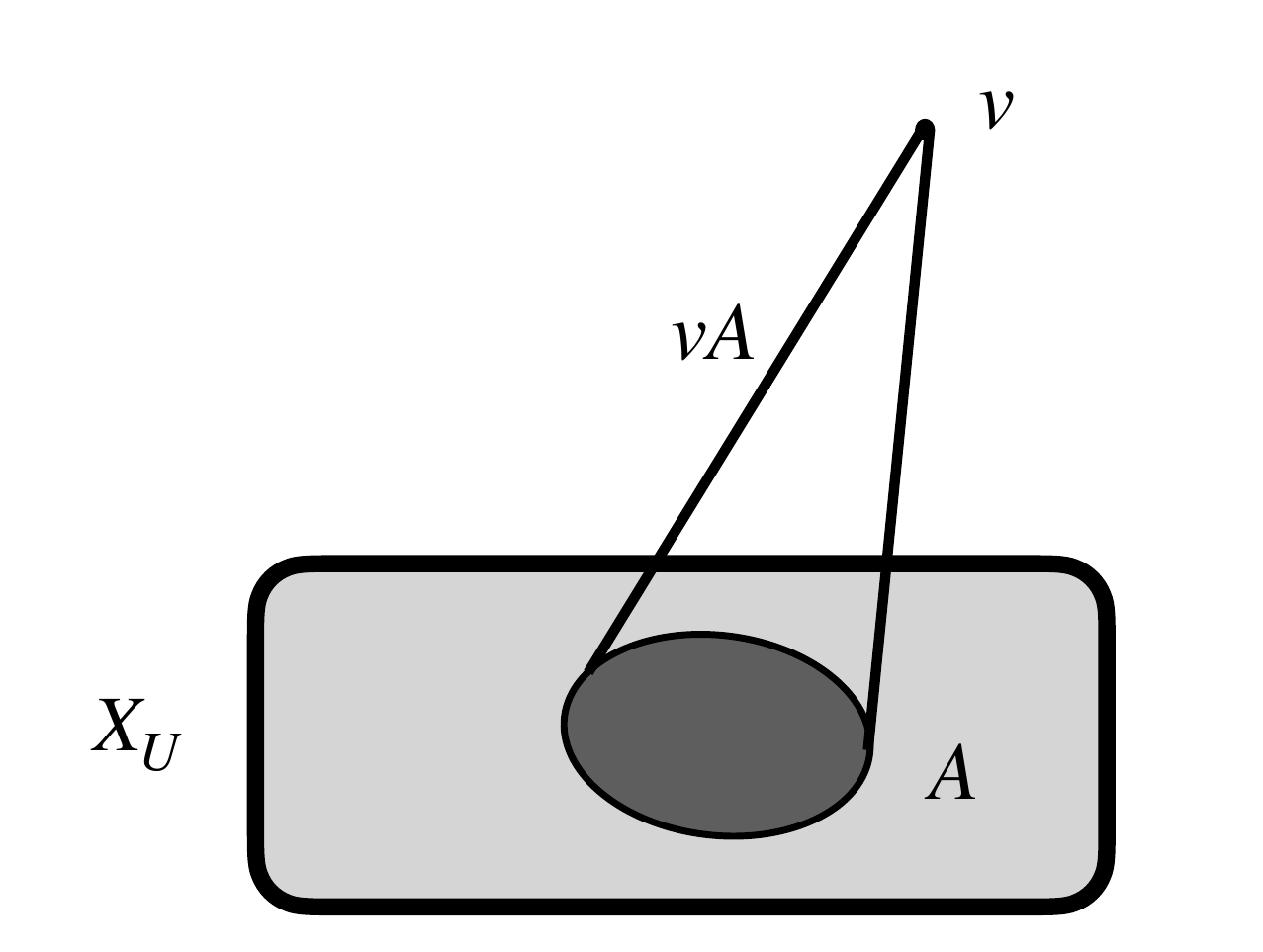}
    \caption{The complex $X_U\cup (vA)$.}
    \label{fig:xua}
\end{figure}

It is clear that $r$-ampleness depends only on the $r$-dimensional skeleton.

Here is an alternative characterisation of $r$-ampleness, see Lemma 2.3 in \cite{EZFM}:

\begin{lemma}\label{lm:equiv}
\label{AB}
A simplicial complex $X$ is $r$-ample if and only if for every pair $(A,B)$ consisting of a simplicial complex $A$ and an induced subcomplex $B$ of ~$A$, satisfying $|V(A)|\le r+1$, and for every embedding $f_B$ of $B$ into $X$, there exists an embedding $f_A$ of $A$ into $X$ extending $f_B$. 
\end{lemma}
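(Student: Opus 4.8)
The plan is to prove the two implications separately, the key realisation being that Definition \ref{def:ample} is exactly the special case of the extension property in which $A$ has one more vertex than $B$. Throughout I use that an embedding $f_B$ of $B$ into $X$ is an isomorphism onto the induced subcomplex $X_{f_B(V(B))}$, so prescribing $f_B$ amounts to identifying $B$ with $X_U$ for $U=f_B(V(B))$.

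For the forward direction, assume $X$ is $r$-ample and let $(A,B)$ and $f_B$ be given with $|V(A)|\le r+1$. First I would reduce to the case $|V(A)|=|V(B)|+1$ by induction on $|V(A)|-|V(B)|$: choosing an enumeration $w_1,\dots,w_k$ of $V(A)-V(B)$ and letting $B_i$ be the induced subcomplex of $A$ on $V(B)\cup\{w_1,\dots,w_i\}$, each $B_{i-1}$ is induced in $B_i$, has at most $r$ vertices, and $B_i$ adds a single vertex, so extending $f_{B_{i-1}}$ to $f_{B_i}$ one step at a time yields $f_A$. For the single-vertex step, write $V(A)=V(B)\cup\{w\}$ and $U=f_B(V(B))$; since $B$ is induced in $A$, the subcomplex $\lk_A(w)$ lies inside $B$. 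Applying $r$-ampleness to $U$ (of size $\le r$) and the subcomplex $f_B(\lk_A(w))\subseteq X_U$ produces a vertex $v\in V(X)-U$ with $\lk_X(v)\cap X_U=f_B(\lk_A(w))$. I would then define $f_A$ to agree with $f_B$ on $B$ and to send $w\mapsto v$, and verify that it is an embedding by matching simplexes: those avoiding $w$ are carried isomorphically onto $X_U$, while a simplex $\sigma w$ with $\sigma\in\lk_A(w)$ goes to $f_B(\sigma)\cup\{v\}$, which lies in $X$ precisely because $f_B(\sigma)\in\lk_X(v)\cap X_U$.

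For the reverse direction, assume the extension property and let $U\subseteq V(X)$ with $|U|\le r$ and $A_0\subseteq X_U$ be given. I would build the pair $(A,B)$ with $B=X_U$ and $A=X_U\cup(w\ast A_0)$, where $w$ is a new vertex; here the hypothesis $A_0\subseteq X_U$ guarantees that every simplex of $A$ avoiding $w$ already lies in $X_U$, so $B$ is an induced subcomplex of $A$, and $|V(A)|=|U|+1\le r+1$. Taking $f_B$ to be the inclusion $X_U\hookrightarrow X$, the extension property yields an embedding $f_A$ of $A$ with $f_A$ restricting to $f_B$ on $X_U$; setting $v=f_A(w)$ gives $v\in V(X)-U$ by injectivity. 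Since $f_A$ is an isomorphism onto $X_{U\cup\{v\}}$, the simplexes of $X$ of the form $\tau\cup\{v\}$ with $\tau\subseteq U$ are exactly the images of the simplexes $\tau' w$ of $A$ with $\tau'\in A_0$, which shows $\lk_X(v)\cap X_U=A_0$, as required.

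The main obstacle is not any single computation but the bookkeeping in the forward induction: one must check that the intermediate complexes $B_i$ retain at most $r$ vertices and stay induced at each stage, and --- most importantly --- that the one-step extension $f_A$ is an honest embedding (an isomorphism onto an induced subcomplex) rather than merely a simplicial map. This is where the equality $\lk_X(v)\cap X_U=f_B(\lk_A(w))$ supplied by ampleness, rather than a mere inclusion, is used in full force.
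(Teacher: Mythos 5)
Your proof is correct. The paper itself gives no argument for this lemma --- it simply cites Lemma 2.3 of \cite{EZFM} --- so there is nothing to compare line by line; your one-vertex-at-a-time induction, with the single-vertex step matching Definition \ref{def:ample} via $\lk_A(w)$ and the converse obtained by taking $A=X_U\cup(w\ast A_0)$, is exactly the standard argument behind that citation, and your verification that the one-step extension is a genuine embedding (using the equality, not just inclusion, of links) is the point that needs care and you handle it properly.
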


A 2-ample complex is obviously connected, and the example below shows that a 2-ample complex may not be simply connected. 

\begin{example}{\rm 
\label{thirteen} 
Consider a 2-dimensional simplicial complex $X$ having 13 vertexes labelled by integers $0, 1, 2, \dots, 12$. 
A pair of vertexes $i$ and $j$ is connected by an edge iff the difference $i-j$ is a square modulo 13, i.e. if
$i-j \equiv \pm 1, \pm 3, \pm 4 \mod 13.$ The 1-skeleton of $X$ is a well-known Paley graph of order 13. 
Next we add 13 triangles $i, i+1, i+4, $ where  $i=0, 1, \dots, 12.$
We claim that the obtained complex $X$ is 2-ample. 
The verification amounts to the following: for any two vertices, there exists other ones adjacent to both, to neither, only to one, and only to the other. 
Moreover, any edge lies both on a single filled and unfilled triangles. Indeed, an edge $i, i+1$ lies in the triangle $i, i+1, i+4$ (filled) as well as in the triangle $i-3, i, i+1$ (unfilled). Informally, the filled triangles can be characterised by the identity $1+3=4$ and the unfilled by $3+1=4$. 

We note that $X$ can be obtained from the triangulated torus with 13 vertexes, 39 edges and 26 triangles by removing 13 white triangles of type 
$i, i+3, i+4$. 
From this description it is obvious that $X$ collapses onto a graph and calculating the Euler characteristic we find $b_0(X)=1,$ $b_1(X)= 14$ and $b_2(X)=0$. Thus, we see that $X$ is not simply connected. 
}
\end{example}

\begin{remark}{\rm 
The link of a vertex in an $r$-ample simplicial complex is $(r-1)$-ample. More generally, 
 the link of every $k$-dimensional simplex in an $r$-ample complex is $(r-k-1)$-ample. }
 \end{remark}

\begin{example}\label{exbar}{\rm 
J.A. Barmak \cite{Bar} constructed for every $n\ge 0$ an infinite $(2n+1)$-ample simplicial complex which is not $n$-connected. In particular, this shows that a $3$-ample complex can be not simply connected. It would be interesting to have a finite example of this kind. 

The construction of \cite{Bar} starts with the complex $K_0=S^0\ast S^0\ast \dots \ast S^0$, the join of $n+1$ copies of $S^0$. Clearly $K_0$ has $2n+2$ vertices and is homeomorphic to the $n$-dimensional sphere $S^n$. For each $i\ge 0$, the complex $K_{i+1}$ is obtained from $K_i$ by attaching a cone over every subcomplex $L\subset K$ with at most $2n+1$ vertexes. The complex $K=\cup_{i\ge 0} K_i$ is 
obviously $(2n+1)$-ample. The proof that $K$ is not $n$-connected is based on considering
 the fundamental class $[K_0]\in H_n(K_0)$ and observing that its image under the homomorphism $H_n(K_0)\to H_n(K)$ is nonzero. Details can be found in \cite{Bar}, Theorem 1.

}
\end{example}

From Lemma \ref{AB} it follows that an $r$-ample complex $X$ satisfies: (a) any simplicial complex on at most $r+1$ vertexes can be embedded into $X$ and (b) $\dim X\ge r$. 

An $r$-ample complex must be fairly large. To make this statement precise we shall denote by $M'(r)$ the number of simplicial complexes with vertexes from the set $\{1, 2, \dots, r\}$. 
The number $M'(r)+1=M(r)$ is known as {\it the Dedekind number}, see \cite{kleitman1975}, it equals the number of monotone Boolean functions of $r$ variables and has some other combinatorial interpretations, for example, it equals the number of antichains in the set of $r$ elements. 
A few first values of {\it \lq\lq the reduced Dedekind number\rq\rq} $M'(r)$ are $M'(1)=2$, $M'(2)=5$, $M'(3)=19$. For  general $r$,  
the number $M'(r)$ admits estimates
\begin{eqnarray}\label{dede}
{\binom r {\lfloor r/2 \rfloor}}\,  \le\,  \log_2 (M'(r))\, \le \, {\binom r {\lfloor r/2 \rfloor} \left(1+O\left(\frac{\log r}{r}\right)\right)}.
\end{eqnarray}
The lower bound in (\ref{dede}) is easy: one counts only the simplicial complexes having the full $\lfloor n/2 \rfloor$ skeleton; the upper bound in (\ref{dede}) has been obtained in \cite{kleitman1975}. 
Using the Stirling formula one obtains 
\begin{eqnarray}\label{frml25}
\log_2 \log_2 (M'(r)) = r - \frac{1}{2} \log_2 r +O(1).
\end{eqnarray}
\begin{corollary}\label{cor26}\label{lower}
An $r$-ample simplicial complex contains at least $$M'(r)+r \ge 2^{\binom r {\lfloor r/2\rfloor}} +r$$ 
vertexes. 
\end{corollary}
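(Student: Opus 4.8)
The plan is to place the entire weight of the argument on a single, cleverly chosen subset $U\subseteq V(X)$, and then to count, via the extension property, how many vertices outside $U$ are forced to exist. Fix $U$ with $|U|=r$. Each vertex $v\in V(X)-U$ determines a subcomplex $\lk_X(v)\cap X_U$ of $X_U$, and $r$-ampleness asserts precisely that every subcomplex $A\subseteq X_U$ arises in this way for some witness $v_A$. Crucially, $A$ is recovered from its witness through the very formula $A=\lk_X(v_A)\cap X_U$, so distinct subcomplexes are witnessed by distinct vertices; the resulting correspondence $A\mapsto v_A$ is therefore injective, and $|V(X)-U|$ is at least the number of subcomplexes of $X_U$.

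To make this lower bound as strong as possible I would choose $U$ so that $X_U$ is the full simplex on $r$ vertices, which maximizes the number of subcomplexes. This choice is available precisely because of property (a) recorded after Lemma \ref{AB}: an $r$-ample complex embeds every simplicial complex on at most $r+1$ vertices, in particular the complete $(r-1)$-simplex. Taking $U$ to be the vertex set of such an embedded simplex, the subcomplexes of $X_U$ are exactly the simplicial complexes on the $r$ vertices of $U$ containing the empty face, and there are exactly $M'(r)$ of these. Hence $|V(X)-U|\ge M'(r)$, and adjoining the $r$ vertices of $U$ yields $|V(X)|\ge M'(r)+r$.

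The second inequality in the statement is then nothing more than the lower bound already recorded in (\ref{dede}): from $\binom{r}{\lfloor r/2\rfloor}\le \log_2 M'(r)$ one gets $M'(r)\ge 2^{\binom{r}{\lfloor r/2\rfloor}}$, and the corollary follows.

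I do not anticipate a genuine obstacle, since the heart of the matter is just a surjectivity-plus-injectivity count. The only point demanding care is the bookkeeping of which subcomplexes $A$ are legitimate: one must confirm that the relevant subcomplexes of the full simplex number exactly $M'(r)$ rather than $M'(r)\pm 1$, which comes down to the observation that $\lk_X(v)\cap X_U$ always contains the empty simplex and so never equals the void complex. I would also remark that choosing $X_U$ smaller than the full simplex would weaken the count, so the full-simplex choice is essentially forced if one wants the sharp value $M'(r)$.
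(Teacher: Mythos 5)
Your argument is correct and is essentially the paper's own proof: embed a full $(r-1)$-simplex on $r$ vertices (possible by $r$-ampleness), use the extension property to produce, for each of the $M'(r)$ subcomplexes $A$ of that simplex, a witness vertex $v_A$ outside it, note that $A\mapsto v_A$ is injective since $A=\lk_X(v_A)\cap X_U$, and add the $r$ vertices of the simplex itself. The second inequality follows from the lower bound in (\ref{dede}) exactly as you say.
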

\begin{proof} Let $X$ be an $r$-ample complex.
We can embed into $X$ an $(r-1)$-dimensional simplex $\Delta$ having $r$ vertexes. Applying Definition \ref{first},
for every subcomplex $A$ of $\Delta$ we can find a vertex $v_A$ in the complement of $\Delta$ having $A$ as its link intersected with $\Delta$. The number of subcomplexes $A$ is $M'(r)$ and we also have $r$ vertexes of $\Delta$ which gives the estimate. 
\end{proof}

\section{Existence of ample complexes}\label{exist}

\begin{theorem}\label{thm31}
For every $r\ge 5$ and for every $n\ge r2^r2^{2^r}$ there exists an $r$-ample simplicial complex having exactly $n$ vertexes. 
\end{theorem}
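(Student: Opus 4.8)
The plan is to use the probabilistic method: I will exhibit a random simplicial complex on a fixed vertex set of size exactly $n$ and show that it is $r$-ample with positive probability, which forces the existence of at least one $r$-ample complex with $n$ vertices. Since $r$-ampleness depends only on the $r$-skeleton (as noted after Definition \ref{def:ample}), it suffices to generate faces of dimension at most $r$. To guarantee that the number of vertices is \emph{exactly} $n$, I keep all $n$ vertices deterministically and randomise only the higher faces: process the potential faces $\sigma$ with $2\le|\sigma|\le r+1$ in order of increasing dimension, include $\sigma$ with probability $1/2$ if all proper faces of $\sigma$ have already been included, and exclude it otherwise. This produces a genuine (downward-closed) simplicial complex $X$ with $V(X)=\{1,\dots,n\}$.

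Next I estimate, via a union bound, the probability that $X$ fails to be $r$-ample. For a subset $U$ with $|U|\le r$ and a simplicial complex $A$ on a subset of $U$, call the pair $(U,A)$ \emph{bad} if $A\subseteq X_U$ yet no vertex $v\in V(X)\setminus U$ satisfies $\lk_X(v)\cap X_U=A$. I condition on the induced subcomplex $X_U=K$ (where $K\supseteq A$). For a fixed $v$, the event $E_v=\{\lk_X(v)\cap X_U=A\}$ is determined entirely by the coin flips on faces whose vertex set lies in $U\cup\{v\}$ and contains $v$; as $v$ varies over $V(X)\setminus U$ these coin-flip sets are pairwise disjoint, so \emph{given $X_U=K$ the events $E_v$ are mutually independent}. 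I then bound $\mathbb P(E_v\mid X_U=K)$ from below: realising $A$ as $\lk_X(v)\cap X_U$ amounts to forcing $\sigma v\in X$ for each $\sigma\in A$ and $\sigma v\notin X$ for each $\sigma\in K\setminus A$ (faces $\sigma\subseteq U$ with $\sigma\notin K$ are automatically excluded by downward closure). Processing these at most $2^r$ faces in increasing dimension, the downward closure of $A$ makes each requirement compatible with the previous ones, and each is met with conditional probability at least $1/2$; hence $\mathbb P(E_v\mid X_U=K)\ge 2^{-2^r}$.

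Combining these ingredients gives
\begin{equation*}
\mathbb P\bigl((U,A)\text{ bad}\bigr)\;\le\;\bigl(1-2^{-2^r}\bigr)^{\,n-r}\;\le\;\exp\!\bigl(-(n-r)\,2^{-2^r}\bigr),
\end{equation*}
uniformly in $(U,A)$. The number of subsets $U$ with $|U|\le r$ is at most $n^r$, and for each $U$ the number of complexes $A$ is at most the number of subsets of the power set of $U$, i.e.\ at most $2^{2^r}$ (this is the reduced Dedekind number $M'(r)$ discussed before Corollary \ref{lower}). A union bound therefore yields
\begin{equation*}
\mathbb P(X\text{ not }r\text{-ample})\;\le\;n^r\,2^{2^r}\,\exp\!\bigl(-(n-r)\,2^{-2^r}\bigr).
\end{equation*}

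It remains to check that this quantity is strictly less than $1$ whenever $r\ge 5$ and $n\ge r2^r2^{2^r}$. Taking logarithms reduces the claim to the inequality $(n-r)2^{-2^r}>r\ln n+2^r\ln 2$, and substituting $n\approx r2^r2^{2^r}$ makes the left-hand side $\approx r2^r$ while the right-hand side is $\approx(r+1)2^r\ln 2$, so the inequality holds comfortably once $r\ge 5$. The main obstacle is precisely this final calibration of constants together with the clean lower bound $\mathbb P(E_v\mid X_U=K)\ge 2^{-2^r}$: both rely on the fact that only at most $2^r$ faces on $U\cup\{v\}$ are relevant, so that the two doubly-exponential factors match up and the hypothesis $n\ge r2^r2^{2^r}$ is exactly what is needed to overpower the union bound.
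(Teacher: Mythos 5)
Your proof is correct and follows essentially the same route as the paper: a Bernoulli-$1/2$ (medial regime) random complex, a union bound over pairs $(U,A)$ with per-vertex success probability at least $2^{-2^r}$, yielding the same bound $n^r\,2^{2^r}(1-2^{-2^r})^{n-r}$, and the same final calibration against $n\ge r2^r2^{2^r}$. Your only deviation --- fixing the vertex set deterministically so the complex has exactly $n$ vertices, and spelling out the conditional independence of the events $E_v$ given $X_U$ --- is a sensible clarification of details the survey delegates to the proof of Proposition 5.1 in \cite{EZFM}.
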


This was proven in \S 5 of \cite{EZFM} using the probabilistic method. We briefly indicate below the main steps of the proof. 

Consider a random subcomplex $X$ of the standard simplex $\Delta_n$ on the vertex set 
$\{1, 2, \dots, n\}$ with the following probability function: the probability of a simplicial complex $X$ equals 
\begin{eqnarray}\label{gibbs}
\mathbb P(X)=2^{-H(X)}, \quad\mbox{where}\quad H(X)=|F(X)|+|E(X)|,\end{eqnarray}
is the sum of the total number $|F(X)|$ of simplexes of $X$ and the number $|E(X)|$ of external simplexes of $X$. Recall that an external simplex $\sigma$ is such that $\sigma\notin X$ but $\partial \sigma\subset X$. 
Formula (\ref{gibbs}) is a special case of the medial regime assumption, compare formula (3) in \cite{FM}, which reduces to (\ref{gibbs}) when 
\begin{eqnarray}\label{1/2}
p_\sigma= 1/2\end{eqnarray}
for all simplexes $\sigma\in \Delta_n$. 
In other words, it is a special case of the multi-parameter model of random simplicial complexes when each simplex is selected with probability $1/2$. 
The arguments of the proof of Proposition 5.1 in \cite{EZFM} show that the probability that a random subcomplex $X\subset \Delta_n$ is not 
$r$-ample is bounded above by 
\begin{eqnarray}\label{prob}
 n^r\cdot 2^{2^r}(1-2^{-2^r})^{n-r}
\end{eqnarray}
One can show that for $n\ge r2^r2^{2^r}$ the number (\ref{prob}) is smaller than one implying the existence of $r$-ample simplicial complexes. 

\begin{theorem} For any fixed $r\ge 1$, a random simplicial complex on $n$ vertexes with respect to the measure (\ref{gibbs}) is $r$-ample with probability tending to $1$ as $n\to \infty$. 
\end{theorem}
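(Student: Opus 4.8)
The plan is to obtain the result directly from the union-bound estimate (\ref{prob}) already recorded in the discussion of Theorem \ref{thm31}, reducing the statement to an elementary asymptotic computation for fixed $r$. By Definition \ref{first}, the failure of $r$-ampleness is the event $\bigcup_{(U,A)} B_{U,A}$, where $U\su V(X)$ ranges over subsets with $|U|\le r$, $A$ ranges over subcomplexes of $X_U$, and $B_{U,A}$ is the event that no vertex $v\in V(X)-U$ satisfies $\lk_X(v)\cap X_U=A$. First I would bound $\mathbb P(\text{not }r\text{-ample})$ by $\sum_{(U,A)}\mathbb P(B_{U,A})$.

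To estimate a single term I would condition on the induced subcomplex $X_U$. For a fixed vertex $v\notin U$, the condition $\lk_X(v)\cap X_U=A$ is decided entirely by the fair coin flips (\ref{1/2}) attached to the simplices $\tau\cup\{v\}$ with $\tau\su U$; since $|U|\le r$ there are at most $2^r$ of these, so the probability that $v$ realises the prescribed link is at least $2^{-2^r}$, uniformly in $X_U$ and $A$. The decisive point is that for distinct $v,v'\notin U$ these events depend on disjoint families of simplices (each such simplex contains exactly one of $v,v'$), hence they are conditionally independent given $X_U$. Consequently $\mathbb P(B_{U,A}\mid X_U)\le(1-2^{-2^r})^{n-r}$, a bound independent of $X_U$.

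Finally I would count: there are at most $n^r$ subsets $U$ with $|U|\le r$, and for each at most $M'(r)\le 2^{2^r}$ subcomplexes $A$; multiplying reproduces exactly the bound (\ref{prob}), namely $n^r\cdot 2^{2^r}(1-2^{-2^r})^{n-r}$. For fixed $r$ the factor $2^{2^r}$ is a constant and the base $1-2^{-2^r}$ lies strictly in $(0,1)$, so the exponential decay of $(1-2^{-2^r})^{n-r}$ dominates the polynomial growth of $n^r$ and the product tends to $0$ as $n\to\infty$; therefore $X$ is $r$-ample with probability tending to $1$.

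Since the asymptotic step is immediate once (\ref{prob}) is available, the only real content of a self-contained argument is the middle paragraph: the uniform per-vertex lower bound $2^{-2^r}$ together with the conditional independence across vertices. I expect the one point needing care to be the bookkeeping of which coin flips are genuinely active --- that is, which simplices $\tau\cup\{v\}$ have their boundary already present so that (\ref{1/2}) applies --- but this never weakens the crude bound $2^{-2^r}$, which only asserts that some feasible link configuration is attained with at least this probability.
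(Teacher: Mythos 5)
Your proposal is correct and follows essentially the same route as the paper: the paper's proof consists precisely of invoking the union-bound estimate (\ref{prob}) and noting that for fixed $r$ it tends to $0$ as $n\to\infty$. Your middle paragraph additionally fills in the derivation of (\ref{prob}) itself (which the paper delegates to Proposition 5.1 of \cite{EZFM}), and that derivation --- the uniform per-vertex bound $2^{-2^r}$ and the conditional independence over distinct $v$ given $X_U$ --- is sound.
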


This is a consequence of the estimate (\ref{prob}) since for any fixed $r$ the expression (\ref{prob}) tends to $0$ when $n\to \infty$. 


\section{Paley type construction of ample complexes}

This section briefly describes an explicit construction of ample complexes \cite{EZFM} which generalises the well-known 
construction of Paley graphs. 

Fix an odd prime power $n$, an odd prime~$p$ that divides $n-1$, and a primitive element~$g$ in the finite field~$\mathbb{F}_n$. The subset $ Q_{n,p}\subset \mathbb{F}_n$ is defined as follows
$$ Q_{n,p} =\; \left\{g^{\alpha} \;\mid\; \alpha \equiv \beta^2 \bmod p, \;\text{for}\; \alpha,\beta \in \mathbb{Z} \right\} \;\subset\; \mathbb{F}_n.$$
Note that $H = \langle g^p \rangle\subset \mathbb{F}_n^{\times} = \langle g \rangle$ is a multiplicative subgroup of index $p$, since $p|(n-1)$, 
and there is a group isomorphism $\mathbb{F}_n^{\times}/H \to (\mathbb{F}_p,+)$ taking $g H \mapsto 1$. The set $Q_{n,p}$ is the union of $H$-cosets that correspond to quadratic residues mod~$p$, it contains about half of the elements of the field, more precisely
$$ \left|Q_{n,p}\right| \;=\; \frac{p+1}{2p}\,(n-1).$$


%

\begin{definition}
\label{Xnp}
The Iterated Paley Simplicial Complex $X_{n,p}$ has $\mathbb{F}_n$ as its vertex set and a non-empty subset $\{x_1, x_2, \dots, x_t\}\subset \mathbb{F}_n$ forms a simplex if for every subset \newline 
$\{x_{s_1}, x_{s_2}, \dots, x_{s_k}\} \subseteq \{x_1, x_2, \dots, x_t\}$ one has
$$ \prod_{1\le i<j\le k}\left(x_{s_i}-x_{s_j}\right) \;\in\; Q_{n,p}.$$
\end{definition}

Note that $(-1)=g^{(n-1)/2}$, and $(n-1)/2\equiv 0 \bmod p$ since $p$ is odd, hence $(-1)\in H = \langle g^p \rangle$. Therefore, the condition in the definition of~$X_{n,p}$ does not depend on the order of the vertices $x_1, x_2, \dots, x_t$. Note also that all $n$ singletons~$\{x\}$ are hyperedges, because $1 = g^0 \in Q_{n,p}$.

The definitions of $Q_{n,p}$ and hence  of $X_{n,p}$ depend on the choice of primitive element $g \in \mathbb{F}_n$. Any other primitive element $h = g^\alpha \in \mathbb{F}_n$ gives the same construction if $\alpha$ is a quadratic residue mod~$p$, and a different one if not. The two constructions are not isomorphic in general. The result below applies to either choice.

%

\begin{theorem}
\label{amplepaley} 
Let $r \in \mathbb{N}$. Every Iterated Paley Simplicial Complex $X_{n,p}$ with $p>2^{2^r+2r}$ and $n>r^2p^{2r}$ is $r$-ample.
\end{theorem}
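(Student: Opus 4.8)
The plan is to verify Definition \ref{first} directly, producing for every $U=\{u_1,\dots,u_m\}\subseteq\mathbb{F}_n$ with $m\le r$ and every subcomplex $A\subseteq X_U$ a vertex $v$ with $\lk_X(v)\cap X_U=A$. The starting point is the homomorphism $c\colon\mathbb{F}_n^\times\to\mathbb{F}_n^\times/H\cong(\mathbb{F}_p,+)$, $c(g^\alpha)=\alpha\bmod p$, for which $a\in Q_{n,p}$ precisely when $c(a)$ is a square in $\mathbb{F}_p$. Since $c(xy)=c(x)+c(y)$ and $c(-1)=0$, writing $P(T)=\prod_{\{i,j\}\subseteq T}(x_i-x_j)$ for the Vandermonde product one has $c(P(T))=\sum_{\{i,j\}\subseteq T}c(x_i-x_j)$, so $T$ is a simplex of $X_{n,p}$ iff $c(P(S))$ is a square mod $p$ for every $S\subseteq T$. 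Fixing $U$, setting $c_{ij}=c(u_i-u_j)$, and for a candidate $v$ setting $d_i=c(v-u_i)$, a short computation gives $c\bigl(P(S\cup\{v\})\bigr)=\sum_{\{i,j\}\subseteq S}c_{ij}+\sum_{i\in S}d_i=:f(S)$. Hence $S\cup\{v\}$ is a simplex iff $f(S')$ is a square for every $S'\subseteq S$, and therefore $\lk_X(v)\cap X_U=\{S\in X_U:\ f(S')\text{ is a square }\forall S'\subseteq S\}$ depends on $v$ only through the colour vector $\mathbf d=(d_1,\dots,d_m)\in\mathbb{F}_p^m$. This splits the task into a realizability problem over $\mathbb{F}_p$ and an attainment problem over $\mathbb{F}_n$.

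First I would solve realizability: for every $A$ there is a target $\mathbf d^\ast\in\mathbb{F}_p^m$ whose associated link equals $A$. It suffices to prescribe the Legendre symbol $\left(\frac{f(S)}{p}\right)=+1$ on every face $S$ of $A$ and $\left(\frac{f(S)}{p}\right)=-1$ on every minimal non-face of $A$ in $X_U$; downward closedness of the link then forces it to be exactly $A$. Expanding $\prod_{S}\tfrac12\bigl(1+\epsilon_S\left(\tfrac{f(S)}{p}\right)\bigr)$ over the at most $N\le 2^r$ relevant sets $S$, the diagonal term contributes $p^m2^{-N}$ while each remaining term is a quadratic-character sum $\sum_{\mathbf d\in\mathbb{F}_p^m}\left(\tfrac{F_T(\mathbf d)}{p}\right)$, where $F_T$ is a product of distinct — hence square-free — affine forms $f(S)$. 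A Weil--Deligne estimate bounds each such sum by $O(2^{r}p^{m-1/2})$, and the normalizing $2^{-N}$ cancels the number of terms, so the count of admissible $\mathbf d$ is positive once $p$ is double-exponentially large in $r$, which is the rôle of the hypothesis $p>2^{2^r+2r}$.

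Next I would solve attainment: for any $\mathbf d^\ast\in\mathbb{F}_p^m$ there is $v\in\mathbb{F}_n\setminus U$ with $c(v-u_i)=d_i^\ast$ for all $i$. Writing $\mathbf 1\{c(v-u_i)=d_i^\ast\}=\tfrac1p\sum_{t_i\in\mathbb{F}_p}\psi(-t_id_i^\ast)\,\chi^{t_i}(v-u_i)$, where $\psi$ is a nontrivial additive character of $\mathbb{F}_p$ and $\chi=\psi\circ c$ is a multiplicative character of $\mathbb{F}_n$ of order $p$, the number of such $v$ equals $p^{-m}\sum_{\mathbf t}\bigl(\prod_i\psi(-t_id_i^\ast)\bigr)\sum_{v}\chi\bigl(\prod_i(v-u_i)^{t_i}\bigr)$. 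The term $\mathbf t=0$ gives $\approx n/p^m$; for $\mathbf t\neq 0$ the distinctness of the $u_i$ together with $0\le t_i<p$ makes $\prod_i(v-u_i)^{t_i}$ not a $p$-th power, so Weil's bound controls the inner sum by $(m-1)\sqrt n$. Summing over the $p^m-1$ nonzero tuples yields a count at least $p^{-m}\bigl(n-(p^m-1)(m-1)\sqrt n\bigr)-m$, which is positive exactly in the regime $n>r^2p^{2r}$.

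Combining the two steps finishes the proof: realizability supplies a colour vector $\mathbf d^\ast$ inducing the prescribed link $A$, attainment realizes $\mathbf d^\ast$ by an actual vertex $v\notin U$, and by the reduction $\lk_X(v)\cap X_U=A$, so $X_{n,p}$ is $r$-ample. I expect the main obstacle to be the realizability step: one must check that the relevant products of affine forms are genuinely square-free and in sufficiently general position for a multivariate Weil--Deligne bound to apply with a constant polynomial (not exponential) in the degree, and then carry out the bookkeeping of error terms carefully enough to reach precisely the threshold $p>2^{2^r+2r}$. The attainment step, by contrast, is a routine single-variable character-sum estimate of exactly the Paley-graph type and delivers the bound $n>r^2p^{2r}$ cleanly.
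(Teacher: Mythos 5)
The survey itself contains no proof of Theorem \ref{amplepaley} (it defers to \cite{EZFM}), and your argument reproduces exactly the two-step strategy used there: reduce the link condition to the colour vector $(c(v-u_1),\dots,c(v-u_m))\in\mathbb{F}_p^m$, realise the required sign pattern of the pairwise non-proportional affine forms $f(S)$ by a quadratic-character count over $\mathbb{F}_p^m$ (the source of the hypothesis on $p$), and then realise that colour vector by an actual vertex via a Weil estimate for order-$p$ multiplicative characters over $\mathbb{F}_n$ (the source of the hypothesis $n>r^2p^{2r}$, which your count recovers on the nose). The only remaining work is the bookkeeping you already flag in the realizability step: with up to $N\le 2^{r}-1$ prescribed Legendre symbols the crude expansion gives an error of order $2^{N}Np^{m-1/2}$ and hence a threshold closer to $p>2^{2^{r+1}+2r}$, so matching the stated constant $p>2^{2^r+2r}$ requires either a sharper character-sum estimate or a more economical choice of the conditions imposed.
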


We refer to \cite{EZFM} for the proof and further remarks.

Note that the probabilistic construction of 
Theorem \ref{thm31} generate
ample complexes with a smaller number of vertexes compared to Theorem \ref{amplepaley}.

\section{Resilience of ample complexes}
\label{3}

\S 3 of \cite{EZFM} contains results characterising \, \lq\lq resilience\rq\rq\, of $r$-ample simplicial complexes: small perturbations to the complex
reduce its ampleness in a controlled way and hence many important geometric properties pertain. 

The perturbations we have in mind are as follows. 
If $X$ is a simplicial complex and $\mathcal F$ is a finite set of simplexes of $X$, one may consider the simplicial complex $Y$ obtained from $X$ by removing all simplexes of $\mathcal F$ as well as all simplexes which have faces 
belonging to $\mathcal F$. We shall say that $Y$ {\it is obtained from $X$ by removing the set of simplexes $\mathcal F$}. 


We shall characterise the size of $\mathcal F$ by two numbers: $|\mathcal F|$ (the cardinality of $\mathcal F$) and 
$ \dim(\mathcal F)=\sum_{\sigma\in \mathcal F} \dim \sigma$ ({\it \lq\lq the total dimension\rq\rq\ of $\mathcal F$}). 
\begin{theorem}\label{remove1}
Let $X$ be an $r$-ample simplicial complex and let $Y$ be obtained from $X$ by removing a set  $\mathcal F$ of simplexes. Then $Y$ is $(r-k)$-ample provided that 
 \begin{eqnarray}\label{less}
  |\mathcal F|+\dim(\mathcal F) <  M'(k)+k.
 \end{eqnarray}
 In particular, taking into account (\ref{dede}), the complex $Y$ is $(r-k)$-ample if 
  \begin{eqnarray}\label{less1}
|\mathcal F|+\dim(\mathcal F) < 2^{\binom k {\lfloor k/2\rfloor}}+k . 
 \end{eqnarray}
 \end{theorem}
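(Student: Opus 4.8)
The plan is to verify Definition \ref{first} for $Y$ directly: fix $U \subseteq V(Y)$ with $|U| \le r-k$ and a subcomplex $A \subseteq Y_U$, and produce a vertex $v \in V(Y) - U$ with $\lk_Y(v) \cap Y_U = A$. The idea is to first locate \emph{many} candidate vertices that realise $A$ inside the unperturbed complex $X$, and then argue that removing $\mathcal F$ can spoil only a controlled number of them. Throughout I will use that $A \subseteq Y_U \subseteq X_U$, so $A$ is a legitimate target for the $r$-ampleness of $X$.

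First I would establish an abundance statement. Let $Z = \{ v \in V(X) - U : \lk_X(v) \cap X_U = A\}$ be the set of vertices of $X$ realising $A$ over $U$. I claim the induced subcomplex $X_Z$ is $(r-|U|)$-ample. To see this, take $U'' \subseteq Z$ with $|U''| \le r - |U|$ and a subcomplex $B \subseteq X_{U''}$; since $U$ and $U''$ are disjoint and $|U \cup U''| \le r$, the disjoint union $A \cup B$ is a subcomplex of $X_{U \cup U''}$, and $r$-ampleness of $X$ yields $w \in V(X) - (U \cup U'')$ with $\lk_X(w) \cap X_{U \cup U''} = A \cup B$. Intersecting with $X_U$ gives $w \in Z$, while intersecting with $X_{U''}$ shows $\lk_{X_Z}(w) \cap X_{U''} = B$, which is exactly the extension required for $(r-|U|)$-ampleness of $X_Z$. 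Applying Corollary \ref{lower} to $X_Z$ and using $|U| \le r-k$ together with monotonicity of $M'$, I conclude $|Z| \ge M'(r-|U|) + (r-|U|) \ge M'(k) + k$.

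Next I would bound the candidates destroyed by passing to $Y$. Fix $v \in Z$. Since $Y \subseteq X$, every $\tau \in \lk_Y(v) \cap Y_U$ satisfies $\tau, \tau v \in X$, so $\lk_Y(v) \cap Y_U \subseteq \lk_X(v) \cap X_U = A$ automatically. For the reverse inclusion, a simplex $\tau \in A$ lies in $\lk_Y(v) \cap Y_U$ exactly when $\tau v \in Y$, i.e. when no face of $\tau v$ belongs to $\mathcal F$; the faces of $\tau$ are safe because $\tau \in A \subseteq Y_U$, so the only danger is that $\tau' v \in \mathcal F$ for some face $\tau' \subseteq \tau$. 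Hence $v$ fails to realise $A$ in $Y$ only if $\tau' v \in \mathcal F$ for some $\tau' \in A \cup \{\emptyset\}$, and in that case $v$ is a vertex of the simplex $\tau' v \in \mathcal F$. Consequently every spoiled candidate is a vertex of some simplex of $\mathcal F$, so the number of spoiled candidates is at most $\sum_{\sigma \in \mathcal F} |\sigma| = |\mathcal F| + \dim(\mathcal F)$.

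Finally, the hypothesis (\ref{less}) gives $|\mathcal F| + \dim(\mathcal F) < M'(k) + k \le |Z|$, so by counting there is an unspoiled $v \in Z$; this $v$ lies in $V(Y) - U$ (as $\{v\} \notin \mathcal F$) and satisfies $\lk_Y(v) \cap Y_U = A$, proving $(r-k)$-ampleness. The estimate (\ref{less1}) then follows from (\ref{dede}). I expect the main obstacle to be the abundance step: one must verify that $Z$ itself inherits ampleness so that the vertex lower bound of Corollary \ref{lower} can be invoked, and this hinges on correctly amalgamating the two prescribed links $A$ and $B$ on the disjoint vertex sets $U$ and $U''$ into a single admissible target for $X$.
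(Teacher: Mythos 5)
Your argument is correct, and it takes a genuinely different route from the paper's proof. The paper first reduces to the case where $\mathcal F$ is an antichain, splits its vertex set into $W_0\cup W_1$, embeds the auxiliary complex $K=X_U\cup(A\ast\Delta)$ via Lemma \ref{AB} to obtain $k$ vertices $U'$, and only when all of those fail does it manufacture $M'(k)$ further candidates $v_Z$ indexed by the subcomplexes $Z\subseteq\Delta$; the final count pits $M'(k)$ candidates against the bad set $(W_0\cup W_1)-U'$ of size at most $|\mathcal F|+\dim\mathcal F-k$, the subtracted $k$ coming from having forced $U'$ into the bad set. You instead prove an abundance lemma: the full candidate set $Z=\{v:\lk_X(v)\cap X_U=A\}$ spans an $(r-|U|)$-ample induced subcomplex --- this is exactly the finite analogue of Lemma \ref{lm31}, which the paper proves only for the Rado complex --- and then Corollary \ref{lower} gives $|Z|\ge M'(k)+k$ outright, against a spoiled set of size at most $\sum_{\sigma\in\mathcal F}|V(\sigma)|=|\mathcal F|+\dim\mathcal F$. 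Your amalgamation step is sound: since $U\cap U''=\varnothing$, intersecting $\lk_X(w)\cap X_{U\cup U''}=A\cup B$ with $X_U$ recovers $A$ and with $X_{U''}$ recovers $B$, and your spoiling criterion correctly isolates the faces of $\tau v$ through $v$ (faces inside $\tau$ cannot lie in $\mathcal F$ because $\tau\in Y$). What each approach buys: yours is more modular, needs no antichain reduction and no case split, and exposes a reusable structural fact about the candidate set; the paper's is self-contained and does not rely on Corollary \ref{lower}, producing the needed candidates by hand. Both arrive at exactly the threshold $M'(k)+k$ of (\ref{less}).
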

 
To illustrate this general result suppose that $X$ is $r$-ample where $r\ge 3$ and $Y$ is obtained from $X$ by 
removing a set of $a_0$ vertexes and $a_1$ edges. The complex $Y$ will be connected provided it is $2$-ample. Applying Theorem \ref{remove1} with $k=r-2$ we see that the inequality 
\begin{eqnarray}\label{con1}
a_0+2a_1<M'(r-2)+r-2
\end{eqnarray}
guaranties the $2$-ampleness and hence the connectivity of $Y$. 
The following more explicit inequality
\begin{eqnarray}\label{con2}
a_0+2a_1<2^{\binom{r-2}{\lfloor r/2\rfloor -1}}+r-2.
\end{eqnarray}
implies (\ref{con1}) as follows from (\ref{dede}).  

\begin{proof}[Proof of Theorem \ref{remove1}] Without loss of generality we may assume that $\mathcal F$ forms an anti-chain, i.e. 
no simplex of $\mathcal F$ is a proper face of another simplex of $\mathcal F$. 
Indeed, if $\sigma_1\subset \sigma_2$, where $\sigma_1, \sigma_2\in \mathcal F$,
we can remove $\sigma_2$ from $\mathcal F$ without affecting the complex $Y$.


Consider a vertex $v\in V(Y)$ and its links $\lk_Y(v)\subset \lk_X(v)$ in $Y$ and in $X$, correspondingly. 
Denote by $\mathcal F_v$ the set of simplexes $\sigma\subset \lk_X(v)$ such that either $\sigma\in \mathcal F$ or $v\sigma\in \mathcal F$. As follows directly from the definitions,  {\it $\lk_Y(v)$ is obtained from $\lk_X(v)$ by removing the set of simplexes $\mathcal F_v$}.

Represent $\mathcal F$ as the disjoint union 
$$\mathcal F \, =\, \mathcal F_0\sqcup \mathcal F_{1},$$
 where $\mathcal F_0$ is the set of zero-dimensional simplexes from $\mathcal F$ and $\mathcal F_{1}$ is the set of simplexes in $\mathcal F$ having dimension $\ge 1$. 
 Denote by 
 $$W_0=\cup_{\sigma\in \mathcal F_0} V(\sigma) \quad \mbox{and}\quad W_1=\cup_{\sigma\in \mathcal F_1}V(\sigma)$$ 
 the sets of zero-dimensional simplexes and the set of vertexes of simplexes of positive dimension in $\mathcal F$. 
 Note that $W_0\cap W_1=\emptyset$ due to our anti-chain assumption. Besides, $V(Y)=V(X)-W_0$ and therefore 
 $W_1\subset V(Y)$. 

Let  $U\subset V(Y)$ be a subset and let $v\in V(Y)$ be a vertex such that: \newline (a) $v\notin W_1$ and (b) 
the set $\lk_X(v)\cap X_U$ is a subcomplex of $Y_U$. Then 
\begin{eqnarray}\label{conclusion}
\lk_Y(v)\cap Y_U\, =\, \lk_X(v)\cap X_U. 
\end{eqnarray}
Indeed, we have $\lk_X(v)\cap Y_U =\lk_Y(v)\cap Y_U$ because of our assumption (a) and  
$\lk_X(v)\cap Y_U = \lk_X(v)\cap X_U$ because of (b).

Let $k$ be an integer satisfying (\ref{less}) and 
let $U\subset V(Y)$ be a subset with $|U|\le r-k$. 
Given a subcomplex $A\subset Y_{U}$, we want to show the existence of a vertex $v\in V(Y)-U$ such that 
\begin{eqnarray}\label{inters}
\lk_Y(v)\cap Y_{U} =A.\end{eqnarray}
This would mean that our complex $Y$ is $(r-k)$-ample.

Consider the induced subcomplex $X_{U}$ which obviously contains $A$ as a subcomplex. Consider also the abstract simplicial complex 
$$K=X_{U}\cup (A\ast \Delta),$$ 
where $\Delta$ is an abstract full simplex on $k$ vertexes. 
Note that 
$K$ has at most $r$ vertexes, 
$X_{U}$ is an induced subcomplex of $K$ and it is naturally embedded into $X$. 
Using the assumption that $X$ is $r$-ample and applying Lemma \ref{AB}, we can find an embedding of $K$ into $X$ extending the identity map of $X_{U}$. 
In other words, we can find $k$ vertexes $v_1, \dots, v_k\in V(X)-U$ such that for a simplex $\tau$ of $X_{U}$ 
and for any subset $$\tau'\subset \{v_1, \dots, v_k\}=U'$$ one has 
$\tau\tau' \in X$ if and only if $\tau \in A$. 
If one of these vertexes $v_i$ lies in $V(Y)-W_1$ then (using (\ref{conclusion}))
$$\lk_Y(v_i)\cap Y_{U}=\lk_X(v_i)\cap X_{U}=A$$ 
and we are done. Thus, without loss of generality, we can assume that 
\begin{eqnarray}\label{includ}
U'\subset W_0\cup W_1.
\end{eqnarray}

Let $Z\subset \Delta$ be an arbitrary simplicial subcomplex. 
We may use the $r$-ampleness of $X$ and apply Definition \ref{first} to the subcomplex $A\sqcup Z$ of 
$X_{U\cup U'}$. 
This gives a vertex $v_Z\in V(X)-(U\cup U')$ satisfying 
$$\lk_{X}(v_Z) \cap X_{U\cup U'}  =A\sqcup Z$$ and in particular, 
\begin{eqnarray}\label{inters2}
\lk_{X}(v_Z) \cap X_{U}  =A.\end{eqnarray}
For distinct subcomplexes $Z, Z'\subset \Delta$ the points $v_Z$ and $v_{Z'}$ are distinct and the cardinality of the set 
$\{v_Z; Z\subset \Delta\}$ equals $M'(k)$. 
Noting that (\ref{inters2}) is a subcomplex of $Y_U\subset X_U$ and 
comparing (\ref{conclusion}), (\ref{inters}), (\ref{inters2}), we see that our statement would follow once we know that 
the vertex $v_Z$ lies in $V(Y) -W_1$ at least for one subcomplex $Z$. 

Let us assume the contrary, i.e. 
$v_Z \in (W_0\cup W_1) - U'$
for every subcomplex $Z\subset \Delta$. The cardinality of the set $\{v_Z\}$ equals $M'(k)$ and the cardinality of the set $(W_0\cup W_1) - U'$ equals $|\mathcal F|+\dim \mathcal F - k$ and we get a contradiction with our assumption 
(\ref{less}). 

This completes the proof. 
\end{proof}

\section{Connectivity of ample complexes} \label{sec:6}

We observed above that 2-ample complex is connected and 3-ample complex may be not simply connected. However 
a 4-ample complex must be simply connected:

\begin{prop}\label{lem1cycle}
For $r\ge 4$, any $r$-ample simplicial complex $Y$ is simply connected. 
Moreover, any simplical loop $\alpha: S^1\to Y$ 
with $n$ vertexes in an $r$-ample complex $Y$ bounds a simplicial disc $\beta: D^2\to Y$ where $D^2$ is a triangulation of the disc having $n$ boundary vertexes, at most  
$\lceil\frac{n-3}{r-3}\rceil $ internal vertexes and at most $\lceil \frac{n-3}{r-3}\rceil \cdot(r-1)+1$ triangles. 
\end{prop}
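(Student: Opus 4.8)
The plan is to reduce simple connectivity to the filling statement: since $Y$ is connected (it is $2$-ample) and $\pi_1(Y)$ is generated by simplicial edge-loops, it suffices to show that every such loop bounds a disc, after which the quantitative claim follows from the same construction. I would argue by induction on the number $n$ of vertices of the loop, the inductive step being a single \emph{filling move} that uses $r$-ampleness to shorten the loop by $r-3$ vertices at the cost of one interior vertex and $r-1$ triangles.

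\emph{The filling move.} Let $\alpha$ traverse the vertices $w_0,w_1,\dots,w_{n-1}$ cyclically, with each $w_iw_{i+1}$ an edge of $Y$, and suppose $n>r$. Consider the window $U=\{w_0,w_1,\dots,w_{r-1}\}$, so $|U|\le r$, together with the subcomplex $A\subseteq Y_U$ given by the arc $w_0-w_1-\cdots-w_{r-1}$ (its $r$ vertices and the $r-1$ edges $w_iw_{i+1}$); this is a genuine subcomplex of $Y_U$ because every loop edge lies in $Y$. Applying Definition \ref{first} to the pair $(U,A)$ produces a vertex $v\in V(Y)-U$ with $\lk_Y(v)\cap Y_U=A$. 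In particular $v$ is joined to each $w_i$, and each edge $w_iw_{i+1}$ lies in $\lk_Y(v)$, so all $r-1$ triangles $v\,w_i\,w_{i+1}$ with $0\le i\le r-2$ are simplexes of $Y$. Coning these triangles off is a simplicial homotopy, rel endpoints, from the arc $w_0\cdots w_{r-1}$ to the arc $w_0\,v\,w_{r-1}$, and the new loop $w_0\,v\,w_{r-1}\,w_r\cdots w_{n-1}$ has $n-(r-3)$ vertices (we delete the $r-2$ interior window vertices $w_1,\dots,w_{r-2}$ and insert $v$).

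\emph{Induction and counting.} I would take as inductive hypothesis that a loop on $m$ vertices bounds a disc with at most $\lceil\frac{m-3}{r-3}\rceil$ interior vertices and at most $\lceil\frac{m-3}{r-3}\rceil(r-1)+1$ triangles. For the base case $4\le n\le r$ one applies Definition \ref{first} to $U=\{w_0,\dots,w_{n-1}\}$ (here $|U|\le r$) and to the full cycle $A$, obtaining a single vertex that cones the entire loop with $n\le r$ triangles and one interior vertex, which respects both bounds since $\lceil\frac{n-3}{r-3}\rceil=1$ there. For $n>r$, one filling move attaches one interior vertex and $r-1$ triangles and passes to a loop on $n-(r-3)$ vertices; the identity $\lceil\frac{(n-(r-3))-3}{r-3}\rceil+1=\lceil\frac{n-r}{r-3}\rceil+1=\lceil\frac{n-3}{r-3}\rceil$ propagates the vertex bound exactly, and multiplying the first equality by $r-1$ and adding $1$ propagates the triangle bound. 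Gluing the window's cone to the disc furnished by the inductive hypothesis yields the desired triangulated disc $\beta\colon D^2\to Y$.

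\emph{Where care is needed.} The difficulty is bookkeeping rather than anything deep. First, $\beta$ need not be injective: Definition \ref{first} only guarantees $v\notin U$, so $v$ may coincide in $Y$ with a distant loop vertex. This is harmless, because $D^2$ is an \emph{abstract} triangulation whose interior vertices are abstract and whose triangles map to honest $2$-simplexes of $Y$ (the three vertices $v,w_i,w_{i+1}$ are distinct since $v\notin U$); the stated counts are counts in $D^2$ and are unaffected, and when a window contains repeated images one simply applies Definition \ref{first} to the set of distinct vertices, of size $\le r$. Secondly, the degenerate value $n=3$ is the only place the clean formula is optimistic: a $3$-cycle bounds a single $2$-simplex precisely when that simplex already lies in $Y$, and otherwise one cones it with one interior vertex and three triangles. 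Since the inductive step never produces a loop below $4$ vertices (indeed $n-(r-3)\ge 4$ when $n>r\ge 4$), this affects no loop with $n\ge 4$, and in every case the loop is filled, giving $\pi_1(Y)=0$ and hence simple connectivity.
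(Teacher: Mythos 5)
Your proof is correct and follows essentially the same route as the paper's: the identical windowed filling move (cone an $r$-vertex arc off a new vertex supplied by Definition \ref{first}, shortening the loop by $r-3$), iterated until the loop has length at most $r$, with the same vertex and triangle counts. Your extra care about non-injective loops and the degenerate $n=3$ case is a welcome refinement of bookkeeping the paper leaves implicit, but it does not change the argument.
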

\begin{proof}
If $n\le r$ we may simply apply the definition of $r$-ampleness and find an extension $\beta: D^2\to Y$ with a single internal vertex. If $n>r$ we may apply the definition of $r$-ampleness to any arc consisting of $r$ vertexes, see Figure
\ref{fig:gamma}. This reduces the length of the loop by $r-3$ and performing $\lceil\frac{n-r}{r-3}\rceil$ such operations we obtain a loop of length $\le r$ which can be filled by a single vertex. The number of internal vertexes of the bounding disc will be $\lceil\frac{n-r}{r-3}\rceil +1= \lceil\frac{n-3}{r-3}\rceil.$
\begin{figure}[h]
    \centering
    \includegraphics[scale = 0.6]{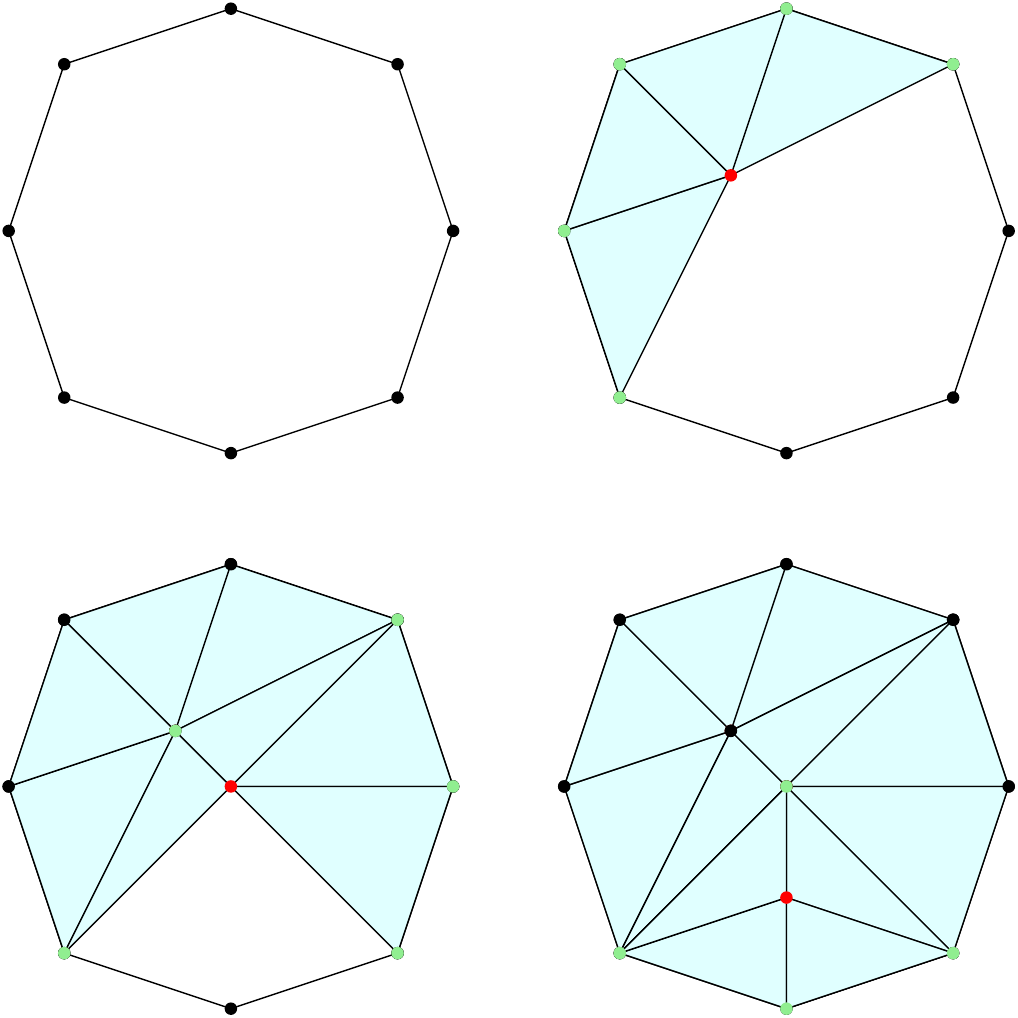}
    \caption{The process of constructing the bounding disc in a $5$-ample complex}
    \label{fig:gamma}
\end{figure}
To estimate the number of triangles we note that on each intermediate step of the process described above we add $r-1$ triangles and on the final step we may add at most $r$ triangles. This leads to the upper bound $\lceil \frac{n-r}{r-3}\rceil \cdot (r-1) +r= 
\lceil \frac{n-3}{r-3}\rceil \cdot(r-1)+1$. 
\end{proof}

Next we state a general result about connectivity of ample complexes:

\begin{theorem}\label{connected} An $r$-ample complex is $(\lfloor\frac{r}{2}\rfloor -1)$-connected. 
\end{theorem}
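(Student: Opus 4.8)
The plan is to show that $\pi_k(X)=0$ for all $k\le m:=\lfloor r/2\rfloor-1$; the nonemptiness (the case $m=-1$, i.e. $r=1$) is built into Definition \ref{first}, connectivity follows from $2$-ampleness, and simple connectivity for $r\ge 4$ is Proposition \ref{lem1cycle}, so these serve as the base cases. By the simplicial approximation theorem every class in $\pi_k(X)$ is represented by a simplicial map $g\colon T\to X$ from a combinatorial triangulation $T$ of $S^k$, and to kill the class it suffices to extend $g$ to a simplicial map on a triangulation of $D^{k+1}$.

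The engine of the argument is the following coning observation, immediate from Definition \ref{first}: if $K\subseteq X$ is a subcomplex with $|V(K)|\le r$, then applying the definition with $U=V(K)$ and $A=K$ yields a vertex $v\in V(X)-V(K)$ with $\lk_X(v)\cap X_{V(K)}=K$, whence $v\ast K\subseteq X$. Thus every subcomplex on at most $r$ vertices has an apex, and in particular any simplicial sphere $g\colon T\to X$ whose image spans at most $r$ vertices is null-homotopic: extend $g$ over the simplicial cone $T\ast\{\mathrm{pt}\}\cong D^{k+1}$ by sending the cone point to $v$, so that each simplex $\{\mathrm{pt}\}\ast\sigma$ maps to $v\ast g(\sigma)\subseteq X$. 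It therefore remains to reduce the general case to this small-image case.

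I would carry out the reduction by induction on $k$, exploiting that the link of a vertex in an $r$-ample complex is $(r-1)$-ample. Fix a vertex $u$ of $T$; its link $\lk_T(u)$ is a combinatorial $(k-1)$-sphere, and $g$ carries it into $\lk_X(g(u))$, which is $(r-1)$-ample, hence $(\lfloor(r-1)/2\rfloor-1)$-connected by the inductive hypothesis. Since $r\ge 2k+2$ we have $\lfloor(r-1)/2\rfloor-1\ge k-1$, so $g|_{\lk_T(u)}$ bounds a disc inside $\lk_X(g(u))$, which after reduction is filled by the coning observation applied within the link. Joining this cap with $g(u)$ produces a $(k+1)$-ball in $X$ whose boundary is the star $u\ast\lk_T(u)$ together with the new, cheaper cap; replacing the star of $u$ by the cap turns $T$ into a new triangulated $k$-sphere of strictly smaller complexity. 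Iterating these star replacements drives the span of the sphere down to at most $r$ vertices, at which point the coning observation closes the argument.

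The main obstacle is the bookkeeping that makes the reduction terminate with exactly the constant $\lfloor r/2\rfloor-1$: one must choose a complexity measure that strictly decreases under each star replacement while guaranteeing that no step ever calls for an apex over more than $r$ vertices. The sharp threshold is governed by the incompressible sphere $S^0\ast\cdots\ast S^0$ ($k+1$ factors), the boundary of the $(k+1)$-dimensional cross-polytope, which spans exactly $2(k+1)$ vertices and carries no apex once $2(k+1)>r$; this is precisely the configuration underlying Barmak's construction in Example \ref{exbar}, and it shows both why the equivalence $k\le\lfloor r/2\rfloor-1\;\Leftrightarrow\;2(k+1)\le r$ is the correct threshold and that the bound cannot be improved. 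Two routine but delicate points remain: controlling simplicial maps $g$ that are degenerate on some simplices, and assembling the local star replacements into a single globally consistent simplicial disc.
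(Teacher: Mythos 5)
Your coning observation is correct and is exactly the bridge the paper itself uses: applying Definition \ref{first} with $U=V(K)$ and $A=K$ shows that every subcomplex on at most $r$ vertices of an $r$-ample complex lies in a closed star, i.e.\ an $r$-ample complex is $r$-conic in Barmak's sense. But the heart of your argument --- the iterated star replacement that is supposed to drive the span of the sphere below $r$ vertices --- has a genuine gap, which you yourself flag: you never exhibit a complexity measure that strictly decreases under star replacement while keeping every coning step within $r$ vertices, and this is not routine bookkeeping. The inductive hypothesis you invoke only gives \emph{connectivity} of the link $\lk_X(g(u))$, which lets you fill $g|_{\lk_T(u)}$ by \emph{some} disc but gives no control on how many vertices that disc spans; after replacing $\st_T(u)$ by the cap $D$, the new sphere gains all interior vertices of $D$ while losing only $u$, so without a quantitative isoperimetric bound (an analogue in all dimensions of the vertex count in Proposition \ref{lem1cycle}) the process need not terminate. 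This is essentially the strategy of Barmak's earlier argument (\cite{Bar}, Theorem 12), and there the bookkeeping only yields that $5^n$-conic implies $n$-connected --- exponentially worse than the constant claimed here.

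The paper obtains the sharp constant by a different route: it reduces to Theorem \ref{thmbarmak} on $r$-conic complexes, proved in the Appendix via the Nerve lemma applied to the cover of $K$ by all closed stars $\st_K(v)$. The key point is the lemma that in an $r$-conic complex the intersection of $t$ closed stars is $(r-t)$-conic; this makes the nerve have complete $(2n+1)$-skeleton and lets one verify the connectivity hypotheses of the Nerve lemma by induction on $n$, with no deformation of spheres and no termination argument needed. Your remark about the cross-polytope boundary $S^0\ast\cdots\ast S^0$ correctly identifies why the threshold $2(k+1)\le r$ is sharp, but identifying the sharp threshold is not the same as proving the bound; as written, your proposal establishes the theorem only in the range already covered by your base cases.
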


This follows from Theorem \ref{thmbarmak} proven by J. A. Barmak, see the Appendix. 

In paper \cite{Bar} J.A. Barmak  introduced the notion of conic simplicial complex.
The class of conic complexes includes the class of ample complexes and is more convenient for studying questions about connectivity. 

\begin{definition}
For an integer $r\ge 0$, a simplicial complex $K$ is said to be {\it $r$-conic} if every subcomplex $L\subset K$ 
with at most $r$ vertexes is contained the closed star $\st_K(v)$ of a vertex $v\in K$. 
\end{definition}
Note that the notions of $0$-conicity and $1$-conicity are equivalent to the complex $K$ to be non-empty. 

\begin{theorem}\label{thmbarmak} 
{\rm [J.A. Barmak]} For $r\ge 2$ any $r$-conic simplicial complex is $(\lfloor r/2\rfloor -1)$-connected. 
\end{theorem}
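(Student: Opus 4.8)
The plan is to reduce the statement to a filling problem for simplicial spheres and then to fill them by induction, combining two facts: that closed stars are cones, and that links of $r$-conic complexes are again conic with a smaller parameter. Throughout I set $m=\lfloor r/2\rfloor-1$, so that the target hypothesis $i\le m$ is equivalent to $2(i+1)\le r$; this is the inequality that makes the conic property applicable at every stage of the construction. First I would invoke simplicial approximation: to prove that $K$ is $m$-connected it suffices to show that for each $0\le i\le m$, every simplicial map $f\colon\Sigma\to K$ from a combinatorial $i$-sphere $\Sigma$ extends over a combinatorial $(i+1)$-ball bounded by $\Sigma$. The case $i=0$ is immediate from $2$-conicity, since any two vertices lie in a common closed star $\st_K(v)=v\ast\lk_K(v)$ and are therefore joined through $v$.

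The base case of the filling is the remark that a closed star is a cone, hence contractible. If the image $f(\Sigma)$ has at most $r$ vertices, then by $r$-conicity it lies in some $\st_K(v)$, and because $\tau\cup\{v\}\in K$ for every simplex $\tau\in\st_K(v)$, the map $f$ extends over the cone $v\ast\Sigma$ by sending the cone point to $v$. Thus any sphere whose image fits inside a single closed star bounds in $K$.

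The inductive engine is a \emph{link lemma}: if $K$ is $r$-conic, then $\lk_K(v)$ is $(r-2)$-conic for every vertex $v$. Given a subcomplex $L'\subseteq\lk_K(v)$ with at most $r-2$ vertices, I would apply $r$-conicity not to $L'$ but to the cone $v\ast L'\subseteq K$, which has at most $r$ vertices; a resulting common star vertex $w$ (which one arranges to be distinct from $v$) then satisfies $\tau\cup\{v,w\}\in K$ for every $\tau\in L'$, and this is exactly the assertion that $L'$ lies in $\st_{\lk_K(v)}(w)$. Granting the lemma, I would prove the theorem by induction on $r$: the step $\lfloor(r-2)/2\rfloor-1=m-1$ shows that every link $\lk_K(v)$ is $(m-1)$-connected. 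The filling of $f\colon\Sigma^i\to K$ then proceeds by a local move at a vertex $u\in\Sigma$. Its link $\lk_\Sigma(u)$ is a combinatorial $(i-1)$-sphere, and (after arranging that the simplicial map is nondegenerate near $u$) $f$ carries it into $\lk_K(f(u))$; since $i-1\le m-1$, this sphere bounds a disc $D$ inside $\lk_K(f(u))$. The closed star $\st_\Sigma(u)$ together with $D$ forms an $i$-sphere whose image lies in the cone $\st_K(f(u))=f(u)\ast\lk_K(f(u))$, so it bounds by the base case; this caps off the star of $u$ and pushes the relevant part of $\Sigma$ one layer closer to a single closed star.

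The hardest part will be organizing this reduction so that it provably terminates, together with the two simplicial-approximation subtleties. The capping disc $D$ introduces new vertices taken from the link, so a naive induction on the number of vertices of $\Sigma$ does not visibly decrease; I expect the correct bookkeeping is an induction on the dimension $i$, with the link lemma simultaneously lowering both the dimension and the conic parameter, closing the recursion once the image has been homotoped into one closed star, where the base case finishes. A secondary technical point is guaranteeing that the simplicial maps remain nondegenerate, so that $f(\lk_\Sigma(u))$ genuinely lands in $\lk_K(f(u))$ rather than merely in $\st_K(f(u))$; this should be controllable by passing to a subdivision or perturbing the simplicial approximation, and it is the place where I would be most careful in a full write-up.
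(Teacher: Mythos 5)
Your strategy---filling spherical cycles by local moves at vertices---is genuinely different from the paper's proof, which covers $K$ by the closed stars $\{\st_K(v)\}_{v\in K}$ and applies the Nerve lemma, the key input being that an intersection of $t$ closed stars in an $r$-conic complex is $(r-t)$-conic. Unfortunately your version contains a fatal gap: the \emph{link lemma} on which the whole induction rests is false. Let $K$ be the path with vertices $a,v,b$ and edges $\{a,v\},\{v,b\}$. Every subcomplex of $K$ lies in $\st_K(v)=K$, so $K$ is $r$-conic for every $r$; yet $\lk_K(v)=\{a,b\}$ is the discrete two-point complex, which is not even $2$-conic (and not $0$-connected). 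The parenthetical step where you ``arrange'' the star vertex $w$ to be distinct from $v$ cannot be carried out: for $L'=\{a,b\}$ the only vertex whose closed star contains $v\circledast L'$ is $v$ itself. This is exactly where conicity is weaker than ampleness---the link of a vertex in an $r$-ample complex is indeed $(r-1)$-ample, but a cone $v\ast M$ is $\infty$-conic for an arbitrary base $M$, so no conicity (hence no connectivity) of links can be deduced. Consequently the inductive claim that $\lk_K(f(u))$ is $(m-1)$-connected fails, and with it the capping move at $u$.

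The termination issue you flag is a second, independent problem: the capping disc $D$ introduces new vertices and no decreasing complexity is identified; a filling argument of roughly this shape does appear in Barmak's paper, but it needs the far stronger hypothesis of $5^n$-conicity to control the recursion and does not reach the bound $2n+2$. To get $\lfloor r/2\rfloor-1$ you should replace links by intersections of closed stars: if $S=\bigcap_{i=1}^{t}\st_K(v_i)$ in an $r$-conic complex, applying $r$-conicity to $\{v_1,\dots,v_t\}\circledast L$ for $L\le S$ with at most $r-t$ vertices shows $S$ is $(r-t)$-conic (here the apex found is automatically a valid star vertex for $S$, unlike in the link situation). Hence any $\le 2n+2$ closed stars meet, the nerve of the star cover has complete $(2n+1)$-skeleton and is $n$-connected, and the Nerve lemma together with induction on $n$ applied to the intersections $S$ finishes the proof.
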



\begin{example}{\rm Consider the $(r+1)$-fold simplicial join $S^0\ast S^0\ast \dots \ast S^0$ which is 
homeomorphic to the sphere $S^{r}$ and is obviously $(2r+1)$-conic. 
This example shows that in general $(2r+1)$-conicity does not imply $r$-connectivity. In other words, the statement of Theorem \ref{thmbarmak} is sharp. 
}
\end{example}

\section{Random simplicial complexes in the medial regime}

In \S \ref{exist} we briefly mentioned a special class of random simplicial complexes in the medial regime which were studied in  \cite{FM}. These are random subcomplexes of the standard simplex $\Delta_n$ with probability measure (\ref{gibbs}). We are interested in asymptotic properties of these complexes as $n\to \infty$. A geometric or topological property of random simplicial complexes is satisfied asymptotically almost surely (a.a.s.) if the probability that it holds tends to $1$ as $n\to \infty$. 

We emphasise that the measure (\ref{gibbs}) is a special case of the multi-parameter probability measure studied in \cite{costa}, \cite{costa1}, \cite{costa2}, \cite{costa3}, where one sets as probability parameters $p_\sigma=1/2$ for all simplexes $\sigma$. 

The main results of \cite{FM} under the assumptions (\ref{1/2}) can be summarised as follows.
We shall use the notation $\beta(n) = \log_2\log_2 n + \log_2\log_2\ln n$. 

\begin{theorem}\label{fm}
Fix arbitrary $\epsilon_0>0$ and $\delta_0>0$. Then: 

(a) The dimension of a random complex $X$ in the medial regime satisfies 
$$\lfloor \beta(n)\rfloor -1\le \dim X \le \beta(n) -1+\epsilon_0,$$
a.a.s.

(b) A random complex $X$ is connected and simply connected, a.a.s.

(c) The Betti numbers $b_j(X)$ vanish for all \, $0<j \le \log_2\log_2 n -1 -\delta_0$, a.a.s. 
\end{theorem}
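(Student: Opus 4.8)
The plan is to treat the three assertions by different tools: part (a) by the first and second moment methods applied to the face numbers, part (b) by reducing to the ampleness results already available in the excerpt, and part (c)---the genuinely hard part---by a probabilistic homology-vanishing argument that goes beyond what ampleness alone can deliver.

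For (a), let $f_j$ denote the number of $j$-dimensional simplexes of $X$. A $(j+1)$-subset of $\{1,\dots,n\}$ spans a $j$-simplex exactly when all of its $2^{j+1}-1$ nonempty faces are selected, and each is selected with probability $1/2$, so
\[
\mathbb{E}[f_j]=\binom{n}{j+1}\,2^{-(2^{j+1}-1)}.
\]
A computation with Stirling's formula shows $\mathbb{E}[f_j]\to\infty$ for $j=\lfloor\beta(n)\rfloor-1$ and $\mathbb{E}[f_j]\to 0$ for $j\ge\beta(n)-1+\epsilon_0$; this calibration is precisely what makes $\beta(n)$ appear. The upper bound on $\dim X$ then follows from Markov's inequality. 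For the lower bound I would run the second moment method: estimate $\mathrm{Var}(f_j)$ by splitting the sum over ordered pairs of $j$-simplexes according to the dimension $i$ of their shared face, observe that the covariance of two simplexes is governed only by their common faces (the ratio $\mathbb{P}(\sigma,\sigma'\in X)/\mathbb{P}(\sigma)\mathbb{P}(\sigma')$ equals $2^{2^{i+1}-1}$), and verify $\mathrm{Var}(f_j)/\mathbb{E}[f_j]^2\to 0$ so that Chebyshev's inequality gives $f_j>0$ a.a.s.

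For (b), connectivity is immediate: the number of vertices of $X$ concentrates near $n/2$ and any two of them are joined by an edge with probability $1/2$, so the $1$-skeleton is a dense Erd\H{o}s--R\'enyi graph $G(n/2,1/2)$, which is connected a.a.s. Simple connectivity I would deduce from ampleness: by the theorem asserting that for any fixed $r$ the medial-regime complex is $r$-ample a.a.s., the complex $X$ is $4$-ample a.a.s., whence Proposition \ref{lem1cycle} (every $r$-ample complex with $r\ge 4$ is simply connected) finishes the argument, with the explicit disc-filling bounds even controlling the size of the contracting discs.

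The main obstacle is part (c). The tempting route---combining $r$-ampleness (or $r$-conicity) with Theorem \ref{connected}/\ref{thmbarmak}---cannot reach the stated range: the random complex is only $r$-ample a.a.s. for $r\lesssim\log_2\log_2 n$ (the existence threshold of Theorem \ref{thm31}), so $(\lfloor r/2\rfloor-1)$-connectivity yields vanishing only up to about $\tfrac12\log_2\log_2 n$, and Example \ref{exbar} shows the factor of two is genuine for ampleness, ruling out a purely combinatorial improvement. To push $b_j=0$ out to $\log_2\log_2 n-1-\delta_0$ one must exploit randomness directly at the level of (co)homology. I would set this up as an induction on $j$ over a field, proving $H^j(X)=0$ either by a cocycle-counting estimate in the spirit of Linial--Meshulam and Aronshtam--Linial--\L{}uczak--Meshulam (bounding the expected number of minimal nonzero $j$-cocycles in the multi-parameter model and showing it is $o(1)$), or by a Garland-type local-to-global spectral argument, using that the link of each $(j-1)$-simplex is again a medial-regime complex on roughly $n\,2^{-2^{j}}$ vertices. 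The key structural point, which also explains the shape of the range, is that for $j\le\log_2\log_2 n-1-\delta_0$ one has $2^{j}\ll\log_2 n$, so these links still have a growing (polynomial in $n$) number of vertices and retain a good spectral gap; making either estimate work uniformly over all such $j$, while tracking the loss of vertices passing to links, is where the real difficulty lies.
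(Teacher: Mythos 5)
First, a point of comparison: the survey does not actually prove Theorem \ref{fm} --- it is stated as a summary of the main results of \cite{FM} under the specialisation $p_\sigma=1/2$, with all proofs deferred to that paper. So your proposal is being measured against an argument that lives elsewhere. Parts (a) and (b) of your sketch are sound. The count $\mathbb{E}[f_j]=\binom{n}{j+1}2^{-(2^{j+1}-1)}$ is correct in the lower/medial model (a $(j+1)$-set spans a simplex iff all $2^{j+1}-1$ of its nonempty subsets are selected), the balance $(j+1)\log_2 n\approx 2^{j+1}$ is exactly what produces $\beta(n)$, Markov applied to a single $j_0$ suffices for the upper bound (since $\dim X\ge j_0$ forces $f_{j_0}>0$), and the second-moment computation with covariance ratio $2^{2^{i+1}-1}$ does close at $j=\lfloor\beta(n)\rfloor-1$, where $\mathbb{E}[f_j]$ is superpolynomially large. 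For (b), the $1$-skeleton is indeed $G(m,1/2)$ on $m\approx n/2$ vertices conditionally on the vertex set, and routing simple connectivity through Theorem \ref{thm:con} and Proposition \ref{lem1cycle} is legitimate and is precisely the mechanism this survey makes available.

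The gap is in part (c), and you have located it yourself. You are right that the ampleness/conicity route (Theorems \ref{thm:con} and \ref{connected}) stalls at roughly $\tfrac12\log_2\log_2 n$, and that Example \ref{exbar} shows the factor of two cannot be removed by a purely combinatorial argument --- this is exactly the content of Remark \ref{rk:37}. But what you offer for the full range $j\le\log_2\log_2 n-1-\delta_0$ is a menu of strategies (cocycle counting \`a la Linial--Meshulam, or a Garland-type local-to-global induction exploiting that links of $(j-1)$-simplexes are again medial-regime complexes on about $n\,2^{-2^{j}}$ vertices) rather than an argument: no estimate is set up, the induction is not formulated, and you explicitly concede that making either approach work uniformly in $j$ is "where the real difficulty lies". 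The structural observation that $2^{j}\ll\log_2 n$ keeps the relevant links polynomially large is the right explanation for the shape of the threshold, and the link heuristic is close in spirit to how \cite{FM} proceeds; but as written, part (c) is a research plan, not a proof, and the statement in that range must still be taken from \cite{FM}.
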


One may strengthen the above statement using Theorem \ref{connected}:

\begin{theorem}\label{thm:con}
Let $r=r(n)$ be an integer valued function satisfying 
\begin{eqnarray}\label{r}
r\le \log_2\log_2 n -\epsilon,\quad \mbox{where}\quad \epsilon >0.\end{eqnarray} 

Then: 

(1) The random complex $X$ in the medial regime is $r$-ample, a.a.s. 

(2) In particular, $X$ is $(\lfloor r/2\rfloor -1)$-connected, a.a.s.
\end{theorem}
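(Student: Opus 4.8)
The plan is to deduce part (1) from the union-bound estimate (\ref{prob}) recorded in \S\ref{exist}, and then read off part (2) for free.

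Part (2) will be immediate once part (1) is established: since Theorem \ref{connected} is a deterministic implication ($r$-ampleness forces $(\lfloor r/2\rfloor-1)$-connectivity), the event that $X$ is $(\lfloor r/2\rfloor-1)$-connected contains the a.a.s.\ event that $X$ is $r$-ample, and therefore also has probability tending to $1$. Hence all the content lies in part (1).

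For part (1) I would start from the inequality already stated in (\ref{prob}): the probability that a random subcomplex $X\subset\Delta_n$ with respect to the measure (\ref{gibbs}) fails to be $r$-ample is at most
$$P_n(r)\;=\; n^r\cdot 2^{2^r}\,(1-2^{-2^r})^{n-r}.$$
This is an honest inequality for every $n$ and every $r\ge 1$ (the constraint $r\ge 5$ in Theorem \ref{thm31} served only to push the \emph{existence} estimate below $1$). The earlier theorem covered fixed $r$; the single new point here is that $r=r(n)$ grows, so one must weigh the growth of the prefactor $n^r2^{2^r}$ against the decay of the tail $(1-2^{-2^r})^{n-r}$ and show $P_n(r(n))\to0$ under (\ref{r}). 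The estimate I would carry out is as follows. Set $c=2^{-\epsilon}\in(0,1)$ and $N=2^r$. Hypothesis (\ref{r}) gives $N=2^r\le 2^{\log_2\log_2 n-\epsilon}=c\,\log_2 n$, hence $2^{-N}\ge n^{-c}$. Using $1-x\le e^{-x}$,
$$(1-2^{-2^r})^{n-r}\;\le\; \exp\!\big(-(n-r)\,2^{-N}\big)\;\le\;\exp\!\big(-(n-r)\,n^{-c}\big),$$
a stretched exponential of order $\exp(-n^{1-c})$ since $1-c>0$. Meanwhile the prefactor contributes only
$$\log_2\!\big(n^r2^{2^r}\big)= r\log_2 n+2^r\le(\log_2\log_2 n)(\log_2 n)+c\log_2 n = O\!\big((\log n)^2\big).$$
Combining the two, one finds $\ln P_n(r)\le O\!\big((\log n)^2\big)-(1+o(1))\,n^{1-c}\to-\infty$, so $P_n(r)\to0$ and $X$ is $r$-ample a.a.s.

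The step I expect to be the main obstacle is precisely this last comparison, and it is also where the hypothesis is sharp: the margin $\epsilon>0$ is exactly what makes $c=2^{-\epsilon}<1$ and hence produces a genuinely growing exponent $(n-r)n^{-c}\sim n^{1-c}$. At the threshold $\epsilon=0$ one would get $c=1$ and $(n-r)n^{-c}\sim 1$, so the tail would cease to decay and the argument would collapse — matching the heuristic that $r\approx\log_2\log_2 n$ is the ampleness threshold. Beyond this, the only thing to check is that (\ref{prob}) may be applied with the growing parameter $r=r(n)$, but since it is a valid inequality for each fixed $n$, no additional uniformity is required.
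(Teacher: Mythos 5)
Your proposal is correct and follows essentially the same route as the paper: both arguments take the union bound (\ref{prob}), bound its logarithm by a prefactor term of polylogarithmic size minus a tail term of order $n^{1-2^{-\epsilon}}$ using $2^{-2^r}\ge n^{-2^{-\epsilon}}$, and conclude that the difference tends to $-\infty$; part (2) is then read off from Theorem \ref{connected} exactly as you do. Your observation that $\epsilon>0$ is precisely what keeps the exponent $1-2^{-\epsilon}$ positive matches the role this hypothesis plays in the paper's computation.
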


To prove the first statement one observes that the expression (\ref{prob}) under the assumption (\ref{r}) tends to 0 as $n\to \infty$. Indeed,  
the logarithm with base $e$ of (\ref{prob}) is bounded above by
\begin{eqnarray}\label{4}
r\cdot \ln n +\ln 2 \cdot 2^r  -n\left(1-\frac{r}{n}\right)\cdot 2^{-2^r}.\end{eqnarray}
Using (\ref{r}) we have 
$n\cdot 2^{-2^r} \ge n^{1-2^{-\epsilon}},$ and since $r/n\to 0$ we have
\begin{eqnarray}\label{comp}
n\left(1-\frac{r}{n}\right)\cdot 2^{-2^r}\ge \frac{1}{2} \cdot n^{1-2^{-\epsilon}}.
\end{eqnarray}
On the other hand, 
\begin{eqnarray}\label{comp1}
r \cdot \ln n +\ln 2\cdot  2^r &\le& 
r\cdot \ln 2\cdot\log_2 n + \ln 2\cdot 2^{-\epsilon}\cdot \log_2 n \nonumber\\
&\le & \ln 2\cdot \log_2 n\cdot (\log_2\log_2 n -\epsilon +2^{-\epsilon}) 
\end{eqnarray}
Comparing (\ref{comp}) with (\ref{comp1}) we see that (\ref{4}) tends to $-\infty$ hence implying that the probability 
(\ref{prob}) of a random complex being not $r$-ample tends to $0$. This proves (1). 

Statement (2) follows from (1) by applying Theorem \ref{connected}. 

\begin{remark}\label{rk:37}
{\rm 
We see that the medial regime random simplicial complex is \newline $(\lfloor \frac{1}{2} \log_2\log_2 n\rfloor -1)$-connected, which is roughly half of the dimensions where its Betti numbers vanish, see Theorem \ref{fm}, (c). 
This leaves open the important question of whether the integral homology groups $H_j(X)$ may be nontrivial (and hence finite) for dimensions $j$ in the interval
$\frac{1}{2}\log_2\log_2 n \le j < \log_2\log_2 n -1.$ }
\end{remark}

\section{Topological complexity of random simplicial complexes in the medial regime}

First we recall the notion of topological complexity $\tc(X)$ (see \cite{F03}, \cite{Finv}), where $X$ is a path-connected topological space. Intuitively, the integer 
$\tc(X)$ is a measure of navigational complexity of $X$ viewed as the configuration space of a system. To give the precise definition, 
consider the path space $X^I$, i.e. the space of all continuous 
maps $I=[0,1] \to X$ equipped with compact-open topology, and the fibration 
\begin{eqnarray}\label{fibr}
\pi : X^I \to X \times X, \quad \alpha \mapsto (\alpha(0), \alpha(1)).
\end{eqnarray} 
The {\it topological complexity} $\tc(X)$ of $X$ is defined as the sectional category of fibration (\ref{fibr}). In other words, $\tc(X)$ is the smallest integer $k\ge 0$ such that there exists and open cover
$$X\times X = U_0\cup U_1\cup \dots\cup U_k$$
of cardinality $k$ with the property that each open set $U_i$ admits an open section of the fibration (\ref{fibr}), 
cf. \cite{F03}. 

A section of fibration (\ref{fibr}) can be viewed as {\it a robot motion planning algorithm}, and the topological complexity
$\tc(X)$ describes singularities of such algorithms \cite{Finv}. 

For information about recent developments related to the notion of $\tc(X)$ we refer the reader to \cite{GraLV}. 

As an illustrative example consider the space $F(\R^d, n)$, the configuration space of $n$ pairwise distinct points in $\R^d$ which was analysed in \cite{Finv}, \S 4.7. 
The space $F(\R^d, n)$ models motion of $n$ robots in $\R^d$ avoiding collisions. The topological complexity of this space is given by
$$\tc(F(\R^d, n))=\left\{
\begin{array}{lll}
2n-2 & \mbox{for}& d \, \, \mbox{odd},\\
2n-3 & \mbox{for}& d \, \, \mbox{even}.
\end{array}
\right.
$$
We use here the normalised version of the topological complexity which is smaller by 1 compered with the notion of \cite{F03}, \cite{Finv}. 

The above example shows that the topological complexity can be arbitrarily large. 

In this section we shall study the situation when the simplicial complex $X$ is random. 
More specifically, we shall assume that $X$ is a random subcomplex of the standard simplex 
$\Delta_n$ on the vertex set $\{1, 2, \dots, n\}$
generated by the medial regime model (\ref{gibbs}). Surprisingly, under these assumptions for large $n\to \infty$ 
the topological complexity $\tc(X)$ is small with probability tending to $1$:

\begin{theorem}\label{thm81} Let $X\subset \Delta_n$ be a random simplicial complex in the medial regime. Then, with probability tending to $1$ as $n\to \infty$, one has $$\tc(X)\le 4.$$ 
\end{theorem}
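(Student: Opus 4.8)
\emph{Proof proposal.} The plan is to combine the connectivity estimate of Theorem~\ref{thm:con} with the classical Schwarz-type upper bound for topological complexity in terms of dimension and connectivity (see \cite{Finv}). Recall that for a path-connected finite-dimensional CW complex $X$ which is $s$-connected, the fibre $\Omega X$ of the path fibration~(\ref{fibr}) is $(s-1)$-connected, while the base $X\times X$ has dimension $2\dim X$; Schwarz's theorem on the genus of a fibration then gives
\begin{equation}\label{schwarz}
\tc(X)\le k \quad\text{whenever}\quad 2\dim X < (k+1)\,s .
\end{equation}
In particular $\tc(X)\le 4$ as soon as $2\dim X<5s$. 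A random complex in the medial regime is a finite simplicial complex and is highly connected a.a.s.\ (hence path-connected), so~(\ref{schwarz}) applies, and the whole problem reduces to comparing $\dim X$ with the connectivity of $X$.

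First I would fix a small $\epsilon>0$ and set $r=r(n)=\lfloor \log_2\log_2 n-\epsilon\rfloor$, which satisfies the hypothesis~(\ref{r}) of Theorem~\ref{thm:con}. By part~(2) of that theorem the random complex $X$ is $s$-connected a.a.s., where $s=\lfloor r/2\rfloor-1=\tfrac12\log_2\log_2 n-O(1)$. Simultaneously, by part~(a) of Theorem~\ref{fm}, for any fixed $\epsilon_0>0$ one has a.a.s.\ the dimension bound $\dim X\le \beta(n)-1+\epsilon_0$, where $\beta(n)=\log_2\log_2 n+\log_2\log_2\ln n$. Since the event that both estimates hold is the intersection of two events of probability tending to $1$, it too has probability tending to $1$.

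It then remains to verify $2\dim X<5s$ on this event. Substituting,
\begin{equation}\label{comparison}
2\dim X \le 2\log_2\log_2 n + 2\log_2\log_2\ln n + O(1) \quad\text{and}\quad 5s\ge \tfrac52\log_2\log_2 n -O(1),
\end{equation}
so the desired inequality is equivalent to $2\log_2\log_2\ln n < \tfrac12\log_2\log_2 n -O(1)$, which holds for all large $n$ because $\log_2\log_2\ln n$ is a triple logarithm and hence of lower order than the double logarithm $\log_2\log_2 n$. Applying~(\ref{schwarz}) with $k=4$ yields $\tc(X)\le 4$ a.a.s.

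The main point to get right is the constant in the connectivity--dimension ratio: both $\dim X$ and $s$ are of size $\Theta(\log_2\log_2 n)$, with $s$ asymptotically half of $\dim X$, so $2\dim X/s\to 4$ and the integer bound is genuinely borderline. The comfortable slack comes from the factor $5$ in~(\ref{schwarz}), one more than the limiting ratio $4$, together with the fact that the correction term $\log_2\log_2\ln n$ in the dimension is negligible beside $\log_2\log_2 n$; these keep $2\dim X$ strictly below $5s$. I expect this final constant-chasing, rather than any conceptual difficulty, to be the only delicate step.
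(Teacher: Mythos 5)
Your proposal is correct and follows essentially the same route as the paper: both combine the dimension bound of Theorem \ref{fm}(a) with the connectivity bound of Theorem \ref{thm:con}(2) and feed them into the Schwarz-type estimate $\tc(X)<\frac{2\dim X+1}{s+1}$ (Theorem 4.16 of \cite{Finv}), of which your condition $2\dim X<(k+1)s$ is just a mild restatement. The asymptotic comparison you carry out — ratio tending to $4$, with the slack coming from the extra $+1$ in the denominator and the negligible triple-logarithm term — is exactly the computation in the paper's proof.
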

\begin{proof} By Theorem 4.16 from \cite{Finv} the topological complexity of an $r$-connected simplicial complex $X$ satisfies the following inequality
\begin{eqnarray}\label{upper}
\tc(X)< \frac{2\dim X +1}{r+1}.\end{eqnarray}
If $X\subset \Delta_n$ is medial regime random complex then, by Theorem \ref{fm}, 
$$\dim X \le \log_2\log_2 n + \log_2\log_2\ln n$$
with probability tending to $1$ as $n\to \infty$. On the other hand, by Theorem \ref{thm:con} the random complex $X$ is $r$-connected, where $r=\frac{1}{2} \log_2\log_2n -3,$
with probability tending to $1$ as $n\to \infty$. Thus,  
$$
\frac{2\dim X +1}{r+1}\le 4\cdot \frac{\log_2\log_2 n + \log_2\log_2\ln n}{\log_2\log_2 n -4}\le 4.5
$$
for large $n$, and we obtain from (\ref{upper}) that $\tc(X)\le 4,$ asymptotically almost surely. 
\end{proof} 

\begin{remark}{\rm 
I believe that statement of Theorem \ref{thm81} can potentially be strengthened to state that $\tc(X)=2$ with probability tending to $1$ for a random simplicial complex $X$ in the medial regime. Achieving this will require improving the connectivity threshold of Theorem \ref{thm:con}, see
 Remark \ref{rk:37}.
}\end{remark}

\section{The $\infty$-ample Rado complex}

In this section we shall follow \cite{Rado} and consider simplicial complexes which are $r$-ample for any $r\ge 1$; we call them $\infty$-ample. 

\begin{theorem}\label{thmrado} (a) There exists an $\infty$-ample complex having a countable set of vertexes. 
(b) Any two $\infty$-ample complexes with countable sets of vertexes are isomorphic. 
\end{theorem}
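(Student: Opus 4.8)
The plan is to prove (a) by an explicit inductive construction and (b) by a back-and-forth argument, both patterned on the classical treatment of the Rado graph, with all the ampleness being supplied by Definition \ref{first} and its reformulation $X_{U\cup\{v\}}=X_U\cup(v\ast A)$.

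For (a) I would build $X$ on the vertex set $\N$ in stages. Enumerate all \emph{tasks} $(U,A)$, where $U\subset\N$ is finite and $A$ is a finite simplicial complex with $V(A)\subseteq U$; there are only countably many. Beginning from a single vertex, at stage $s$ I keep a finite complex $X_s$ on $\{0,1,\dots,m_s\}$ and process the next task $(U,A)$ that is currently \emph{admissible}, meaning $U\subseteq V(X_s)$ and $A\subseteq (X_s)_U$. To process it I introduce a fresh vertex $v=m_s+1$ and declare its star to be the cone $v\ast A$, i.e. I set $X_{s+1}=X_s\cup(v\ast A)$; this is a legitimate simplicial complex because $A$ is downward closed and $A\subseteq (X_s)_U\subseteq X_s$, and by construction $\lk_{X_{s+1}}(v)=A$. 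The crucial observation is that this never disturbs earlier tasks: every simplex created at a later stage contains the newest vertex, so for $\tau\subseteq U$ the membership $v\tau\in X$ is fixed once and for all at stage $s+1$; hence $\lk_X(v)\cap X_U=A$ in the limit $X=\bigcup_s X_s$, and the induced complex $X_U$ itself stabilises as soon as all of $U$ has appeared. Using a standard dovetailing so that each admissible task is eventually processed, the resulting $X$ satisfies Definition \ref{first} for every finite $U$ and every $A\subseteq X_U$, i.e. it is $\infty$-ample. (Alternatively one obtains (a) probabilistically: a random complex on $\N$ in the medial regime is $\infty$-ample with probability $1$, by bounding, for fixed $(U,A)$, the probability that no candidate vertex realises $A$ and applying a countable union bound, exactly as Erd\H{o}s--R\'enyi do for the Rado graph.)

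For (b) let $X$ and $Y$ be $\infty$-ample with $V(X)=\{x_0,x_1,\dots\}$ and $V(Y)=\{y_0,y_1,\dots\}$. I would produce an isomorphism as an increasing union of finite partial isomorphisms $f_k\colon X_{U_k}\to Y_{W_k}$ between induced subcomplexes. The key extension step is: given such an $f_k$ and a vertex $x\in V(X)-U_k$, extend $f_k$ so that $x$ enters the domain. Put $A=\lk_X(x)\cap X_{U_k}$, a subcomplex of $X_{U_k}$; then $f_k(A)$ is a subcomplex of $Y_{W_k}$. Applying Definition \ref{first} to $Y$ with the subset $W_k$ and subcomplex $f_k(A)$ yields a vertex $y\in V(Y)-W_k$ with $\lk_Y(y)\cap Y_{W_k}=f_k(A)$. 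Setting $f_{k+1}=f_k\cup\{(x,y)\}$ is then an isomorphism, since by the reformulation $X_{U_k\cup\{x\}}=X_{U_k}\cup(x\ast A)$ and $Y_{W_k\cup\{y\}}=Y_{W_k}\cup(y\ast f_k(A))$, while $f_k$ already carries $A$ onto $f_k(A)$. The symmetric ``back'' step forces a prescribed $y_n$ into the range in the same way, using the ampleness of $X$.

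Alternating the two directions, and at stage $2n$ (resp. $2n{+}1$) demanding that $x_n$ (resp. $y_n$) lie in the domain (resp. range), the union $f=\bigcup_k f_k$ is a bijection $V(X)\to V(Y)$. It is a simplicial isomorphism because a finite vertex set is a simplex of $X$ iff it is a simplex of some $X_{U_k}$ with $U_k$ large enough, and each $f_k$ preserves this; this proves (b). In both parts the only place ampleness enters is the realisation of a prescribed link-intersection $A$ by a fresh vertex, which is precisely Definition \ref{first} (equivalently Lemma \ref{AB}). I therefore expect the main obstacle to be organisational rather than conceptual: the dovetailing bookkeeping in (a), ensuring every admissible task is served while no previously realised link is later disturbed, and the routine verification in (b) that each one-vertex extension remains an isomorphism of induced subcomplexes.
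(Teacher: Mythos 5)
Your proposal is correct and follows essentially the same route as the paper: part (a) is the same iterated cone-attachment construction (the paper attaches cones over all subcomplexes of $X_n$ simultaneously at each stage rather than dovetailing one task at a time, but the verification that new simplexes never disturb previously realised links is identical), and part (b) is the same back-and-forth argument, with your one-vertex extension step being exactly the content of the paper's Lemma \ref{lm16}.
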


The simplicial complex of Theorem \ref{thmrado} is called {\it the Rado complex}, in honour of Richard Rado who invented the Rado graph. The Rado graph is the 1-dimensional skeleton of the Rado complex $R$, see \cite{C}. 

\begin{proof}[Proof of Theorem \ref{thmrado} (a)]
To prove (a) we shall construct the required complex $X$ as follows. 
Let $X_0$ be a single point and let each complex $X_{n+1}$ (where $n\ge 0$) be obtained from $X_n$ by first adding a finite set of vertexes $v(A)$ labelled by all subcomplexes $A\subset X_n$ (including $A=\emptyset$); then we 
consider the cone $v(A)\ast A$ with apex $v(A)$ and base $A$ and attach each such cone to $X_n$ along the base $A$. Thus, 
$$X_{n+1} =X_n \cup \bigcup_A(v(A)\ast A),
$$
and we have the infinite chain of finite subcomplexes $X_0\subset X_1\subset X_2\subset \dots$. The complex $$X=\cup_{n\ge 1} X_n$$ is $\infty$-ample. Indeed, any finite set of vertexes $U\subset V(X)$ is contained in $V(X_n)$ for some $n$. The induced subcomplex $X_U$ coincides with $(X_n)_U$ and then for any subcomplex $A\subset X_U$ the vertex
 $v= v(A)$ validates the ampleness property of Definition \ref{def:ample}.   \end{proof}
 
 In the proof of (b) we shall use Lemma \ref{lm16} stated below.

 \begin{lemma}\label{lm16} Let $X$ be an $\infty$-ample complex and let $L'\subset L$ be a pair consisting of a finite simplical complex $L$ and an  induced subcomplex $L'$. Let $f': L' \to X_{U'}$ be an isomorphism of simplicial complexes, where $U'\subset V(X)$ is a finite subset. Then there exists a finite subset $U\subset V(X)$ containing $U'$ and an isomorphism $f: L \to X_{U}$ with $f|L' =f'$. 
\end{lemma}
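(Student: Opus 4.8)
The plan is to read this lemma as the \emph{one-sided extension step} (the ``forth'' half of the back-and-forth argument that proves uniqueness of the Rado complex in Theorem \ref{thmrado}(b)), and to prove it in one of two ways. The quickest route is a direct appeal to Lemma \ref{AB}: since $X$ is $\infty$-ample it is in particular $r$-ample for $r=|V(L)|-1$, so the size hypothesis $|V(A)|\le r+1$ of Lemma \ref{AB} holds with $A=L$ and $B=L'$. The embedding $f_B=f'$ then extends to an embedding $f_A\colon L\to X$; setting $U=f_A(V(L))$ turns $f_A$ into an isomorphism $f\colon L\to X_U$ with $f|_{L'}=f'$ and $U\supseteq f'(V(L'))=U'$, which is exactly what is claimed.

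Since the exercise is to exhibit the mechanism, I would also give the self-contained argument, by induction on $m=|V(L)|-|V(L')|$, adjoining the extra vertices one at a time. If $m=0$ then $V(L)=V(L')$, and because $L'$ is induced in $L$ this forces $L=L'$, so $U=U'$, $f=f'$ works. For the inductive step it suffices to treat $m=1$, applied along the chain of induced subcomplexes $L'=L_0\subset L_1\subset\cdots\subset L$ obtained by adding one new vertex at each stage. So suppose $V(L)=V(L')\cup\{w\}$ with $w\notin V(L')$ and $L'$ induced. Every simplex of $L$ containing $w$ has the form $w\sigma$ with $\sigma\in \lk_L(w)$, and since such $\sigma$ satisfies $\sigma\subseteq V(L')$ and $\sigma\in L$ while $L'$ is induced, the subcomplex $A:=\lk_L(w)$ lies inside $L'$. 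I then apply Definition \ref{def:ample} to the finite set $U'$ (using that $X$ is $|U'|$-ample) and to the subcomplex $f'(A)\subseteq X_{U'}$, obtaining a vertex $v\in V(X)-U'$ with $\lk_X(v)\cap X_{U'}=f'(A)$; I set $U=U'\cup\{v\}$ and define $f$ by $f|_{L'}=f'$ and $f(w)=v$.

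The only real work is to verify that $f$ is an isomorphism of $L$ onto $X_U$. For $S\subseteq V(L)$ with $w\notin S$ this is immediate from $f'$ being an isomorphism together with the fact that both $L'$ and $X_{U'}$ are induced. For $S=w\sigma$ with $\sigma\subseteq V(L')$ one has $S\in L \iff \sigma\in A$, whereas $f(S)=v\,f'(\sigma)\in X_U \iff f'(\sigma)\in \lk_X(v)\cap X_{U'}=f'(A) \iff \sigma\in A$; here one uses that $\lk_X(v)\subseteq X$, so membership in $\lk_X(v)$ already forces membership in $X_{U'}$. Hence $S\in L \iff f(S)\in X_U$, which completes the one-vertex step and therefore the induction.

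I expect no genuine obstacle here: the entire content is the correct translation of the link equation $\lk_X(v)\cap X_{U'}=f'(A)$ into the assertion that $v$ attaches to $X_{U'}$ in precisely the way $w$ attaches to $L'$. The two points that require care are purely bookkeeping, namely keeping $U'$ finite at every stage so that $\infty$-ampleness may be invoked, and using the hypotheses that $L'$ and $X_{U'}$ are \emph{induced} at exactly the moments where simplices disjoint from $w$ (resp.\ $v$) are compared.
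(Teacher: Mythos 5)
Your proposal is correct, and the self-contained induction you give (reduce to adjoining a single vertex $w$, note $A=\lk_L(w)\subset L'$ because $L'$ is induced, apply Definition \ref{def:ample} to $U'$ and $f'(A)$ to get $v$ with $\lk_X(v)\cap X_{U'}=f'(A)$, and set $f(w)=v$) is exactly the paper's proof, with your verification that $f$ is an isomorphism onto $X_U$ filling in details the paper leaves implicit. Your first, quicker route via Lemma \ref{AB} is also valid and amounts to observing that the lemma is an immediate corollary of that equivalence once $r$ is taken to be $|V(L)|-1$.
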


\begin{proof}[Proof of Lemma \ref{lm16}]
It is enough to prove this statement under an additional assumption that $L$ has a single additional vertex, i.e. $|V(L)| - |V(L')| =1$. In this case 
$L$ is obtained from $L'$ by attaching a cone $wA$ where $w\in V(L) - V(L')$ denotes the new vertex and $A\subset L'$ is a subcomplex (the base of the cone). Applying the defining property of the ample complex to the subset $U'\subset V(X)$ and the subcomplex $f'(A)\subset X_{U'}$
we find a vertex $v\in V(X)-U'$ such that $\lk_X(v)\cap X_{U'} =f(A)$. We can set $U=U'\cup \{v\}$ and extend $f'$ to the isomorphism 
$f: L\to X_{U}$ by setting $f(w)=v$.
\end{proof}

\begin{proof}[Proof of Theorem \ref{thmrado} (b)] The proof uses the well-known back-and-forth argument. 
Let $X$ and $X'$ be two $\infty$-ample complexes. Enumerate their vertexes $V(X)=\{v_1, v_2, \dots \}$ and
$V(X') = \{v'_1, v'_2, \dots \}$ and set $U_1=\{v_1\}$ and $U'_1=\{v'_1\}$. The isomorphism $f_1:X_{U_1}\to X'_{U'_1}$ given by $f_1(v_1)=v'_1$. 

Next we define sequences of finite subsets $U_1\subset U_2\subset \dots \subset V(X)$ and 
$U'_1\subset U'_2\subset \dots \subset V(X')$ satisfying $\cup U_n=V(X)$ and $\cup U'_n=V(X')$ and isomorphisms 
$f_n: X_{U_n}\to X'_{U'_n}$ with $f_n|_{X_{U_{n-1}}}=f_{n-1}$. The whole collection $\{f_n\}$ then defines an isomorphism $f: X\to X'$. 

Acting by induction we shall assume that the sets $U_i$ and $U'_i$ and the isomorphisms $f_i$ for all $i\le n$ have been constructed. If $n$ is odd, we shall find the smallest index $i$ such that $v_i\notin U_{n}$ and set 
$U_{n+1}=U_n\cup\{v_i\}$; then applying Lemma \ref{lm16} we can find a vertex $v'_j\in V(X')-U'_n$ and an isomorphism
$f_{n+1}: X_{U_{n+1}}\to X'_{U'_{n+1}}$ extending $f_n$, where $U'_{n+1}=U'_n\cup \{v'_j\}$. 

If $n$ is even we shall find the smallest index $r$ such that $v'_r\notin U'_n$ and set $U'_{n+1}=U'_n\cup \{v_r\}$ and then by Lemma \ref{lm16} we can find a vertex $v_s\in V(X)-U_n$ and an isomorphism 
$f_{n+1}: X_{U_{n+1}}\to X'_{U'_{n+1}}$, extending $f_n$, where $U_{n+1}=U_n\cup \{v_s\}$. 
\end{proof}

\begin{theorem} \label{thm73} (a) The Rado complex $R$ is universal in the sense that every countable simplicial complex is isomorphic to an induced subcomplex of $R$. (b) The Rado complex $R$ is homogeneous in the sense that for every two finite induced subcomplexes $R_U, R_{U'}\subset R$ and for every isomorphism $f:R_U\to R_{U'}$ there exists an isomorphism $F:R\to R$ with $F|R_U=f$. 
(c) Every universal and homogeneous countable simplicial complex is isomorphic to $R$. 
\end{theorem}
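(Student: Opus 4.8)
The plan is to obtain (a) and (b) as direct applications of Lemma \ref{lm16} and the back-and-forth argument already used for Theorem \ref{thmrado}(b), and then to reduce (c) to the uniqueness half of Theorem \ref{thmrado} by showing that universality together with homogeneity forces $\infty$-ampleness. Throughout I use that $R$ is $\infty$-ample, so that Lemma \ref{lm16} is applicable with $X=R$.

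For (a), I would enumerate the vertices of an arbitrary countable complex $L$ as $w_1,w_2,\dots$ and set $L_n=L_{\{w_1,\dots,w_n\}}$, so that $L_{n-1}\subset L_n$ is an induced subcomplex obtained by adjoining a single vertex. Starting from any map $f_1$ of the vertex $w_1$ to a vertex of $R$, I apply Lemma \ref{lm16} inductively to extend $f_{n-1}\colon L_{n-1}\to R_{U_{n-1}}$ to an isomorphism $f_n\colon L_n\to R_{U_n}$ onto an induced subcomplex, with $U_{n-1}\subset U_n$. The union $f=\bigcup_n f_n$ is then an isomorphism of $L$ onto the induced subcomplex $R_U$ on $U=\bigcup_n U_n$; the only point to check is that every simplex of $R_U$ already lies in some $R_{U_n}$, which is immediate as each simplex is finite. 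For (b), I would run the back-and-forth argument of Theorem \ref{thmrado}(b) verbatim, but with $X=X'=R$ and with initial data $U_0=U$, $U'_0=U'$, $f_0=f$ in place of a single pair of vertices: on odd steps adjoin to the domain the least-indexed vertex of $R$ not yet used and extend forward via Lemma \ref{lm16}, and on even steps adjoin the least-indexed unused vertex to the range and extend backward by applying the Lemma to $f_n^{-1}$. The alternation guarantees that every vertex of $R$ eventually enters both the domain and the range, so $F=\bigcup_n f_n$ is an automorphism of $R$ with $F|R_U=f$.

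For (c), the crux, let $Y$ be universal and homogeneous. By Theorem \ref{thmrado}(b) it suffices to prove that $Y$ is $\infty$-ample, so I fix a finite $U\subseteq V(Y)$ and a subcomplex $A\subseteq Y_U$ and consider the finite \emph{abstract} complex $K=Y_U\cup(v\ast A)$ on the vertex set $U\sqcup\{v\}$. Universality supplies an induced embedding $g\colon K\hookrightarrow Y$; its restriction identifies the induced copy $Y_U\subset K$ with the induced subcomplex $Y_{g(U)}\subset Y$, and homogeneity extends the isomorphism $(g|_{Y_U})^{-1}\colon Y_{g(U)}\to Y_U$ to an automorphism $F\colon Y\to Y$. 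The required vertex is $w=F(g(v))$: since $g$ is induced one has $\lk_Y(g(v))\cap Y_{g(U)}=g(A)$, because every relevant simplex lies in the induced image $g(K)$; applying $F$ then gives $\lk_Y(w)\cap Y_U=F(g(A))=A$, while $w\notin U$ since $g(v)\notin Y_{g(U)}$. This verifies Definition \ref{def:ample} for every $r$, so $Y$ is $\infty$-ample and hence isomorphic to $R$.

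I expect the main obstacle to be the bookkeeping in part (c): one must keep two distinct induced copies of $Y_U$ inside $Y$ — the original on $U$ and the embedded one on $g(U)$ — and verify that the link condition is genuinely transported by the homogeneity automorphism. The delicate point is that the link of $g(v)$ computed in $Y$ agrees, after intersecting with $Y_{g(U)}$, with the link computed inside the induced subcomplex $g(K)$; this uses that all simplices involved have vertices in $g(V(K))$ together with the fact that $g(K)$ is an \emph{induced} subcomplex, so no extra simplices of $Y$ can spuriously enlarge the link.
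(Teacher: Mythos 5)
Your proposal is correct and follows essentially the same route as the paper: parts (a) and (b) are the same inductive application of Lemma \ref{lm16} and the same back-and-forth argument seeded with $f_0=f$, and part (c) is the paper's construction verbatim -- embed the abstract cone extension $Y_U\cup(v\ast A)$ by universality and transport the new apex back by a homogeneity automorphism (you extend $(g|_{Y_U})^{-1}$ where the paper extends $g|_{Y_U}$ and then applies $F^{-1}$, which is the same thing). Your closing remark about the link being computed inside the \emph{induced} image $g(K)$ is exactly the point that makes the paper's final identity $X_{U\cup\{v\}}=X_U\cup vA$ valid.
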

\begin{proof} (a) Let $X$ be a simplicial complex with the vertex set $V(X)=\{v_1, v_2, \dots\}$. Using the 
$\infty$-ampleness property of $R$ we can subsequently find a sequence of vertexes $W= \{w_1, w_2, \dots\}\subset V(R)$ and a sequence of isomorphisms $f_n: X_{U_n}\to R_{W_n}$, where $U_n=\{v_1, \dots, v_n\}$ and $W_n=\{w_1, \dots, w_n\}$, such that $f_n$ extends $f_{n-1}$. This gives an isomorphism 
between $X$ and the induced subcomplex $R_W$.

The proof of (b) uses arguments similar to the ones of the proof of Lemma \ref{lm16}. 

(c) Suppose $X$ is universal and homogeneous. Let $U\subset V(X)$ be a finite subset and let $A\subset X_U$ be a subcomplex of the induced complex. Consider an abstract simplicial complex $L=X_U\cup wA$ which obtained from $X_U$ by adding a cone $wA$ with vertex $w$ and base $A$ where $X_U\cap wA= A$. Clearly, $V(L) = U\cup\{w\}$ and by universality, we may find a subset $U'\subset V(X)$ and an isomorphism $g:L \to X_{U'}$. Denoting $w_1=g(w)$, $A_1=g(A)$ and $U_1=g(U)$ we have 
$X_{U'} = X_{U_1} \cup w_1 A_1.$
Obviously, $g$ restricts to an isomorphism $g| X_U\, :\,  X_U \to X_{U_1}$. 
By the homogeneity property we can find an isomorphism $F:X\to X$ with $F|X_U =g|X_U$. Denoting $v=F^{-1}(w_1)$ we shall have 
$X_{U\cup \{v\}} = X_U \cup vA$ as required. Hence, $X$ is $r$-ample for any $r\ge 0$. 
\end{proof}
\section{Indestructibility of the Rado complex}

The main result of this section is Corollary \ref{cor15} illustrating \lq\lq indestructibility or resilience\rq\rq\,  of the Rado simplicial complex. 

\begin{lemma}\label{lm31}
Let $X$ be a Rado complex, let $U\subset V(X)$ be a finite set and let $A\subset X_U$ be a subcomplex. 
Let $Z_{U,A}\subset V(X)$ denote the set of vertexes $v\in V(X)-U$ satisfying 
$\lk_X(v)\cap X_U=A$. 
Then the set $Z_{U,A}$ is infinite and the induced complex on $Z_{U,A}$ is also a Rado complex. 
\end{lemma}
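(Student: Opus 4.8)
The plan is to prove two things about $Z_{U,A}$: first that it is infinite, and second that the induced complex $X_{Z_{U,A}}$ is again $\infty$-ample (whence, by Theorem \ref{thmrado}(b), it is a Rado complex since it has countably many vertexes).

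For infinitude, I would argue by contradiction. Suppose $Z_{U,A}$ were finite. By definition of $\infty$-ampleness applied to the finite set $U$ and the subcomplex $A\subset X_U$, there is at least one vertex $v$ with $\lk_X(v)\cap X_U=A$, so $Z_{U,A}\ne\emptyset$. To produce infinitely many, I would enlarge the base set: given any finite set $S$ of vertices already known to lie in $Z_{U,A}$, consider the finite set $U\cup S$ and an appropriate subcomplex of $X_{U\cup S}$ restricting to $A$ on $U$ (for instance take the induced complex $A\sqcup \emptyset$ in the spirit of the proof of Theorem \ref{remove1}, where the vertices of $S$ are required to be isolated from the new vertex). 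Applying $\infty$-ampleness to $U\cup S$ yields a new vertex $w\notin U\cup S$ whose link meets $X_U$ in exactly $A$, so $w\in Z_{U,A}\setminus S$. Iterating produces infinitely many distinct vertices in $Z_{U,A}$.

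For the second claim I would verify the defining property of Definition \ref{def:ample} directly for $Y:=X_{Z_{U,A}}$. Let $W\subseteq Z_{U,A}$ be finite and let $B\subseteq Y_W=X_W$ be a subcomplex; I must find a vertex $v\in Z_{U,A}-W$ with $\lk_Y(v)\cap Y_W=B$. The natural move is to apply $\infty$-ampleness of $X$ to the finite set $U\cup W$ together with a subcomplex $C\subseteq X_{U\cup W}$ designed so that its restriction to $U$ is exactly $A$ and its restriction to $W$ is exactly $B$. Since $W\subseteq Z_{U,A}$, the induced complex $X_{U\cup W}$ is constrained on the $U$-to-$W$ cross simplexes, and one must choose $C$ compatibly; the cleanest choice is $C=A\cup B\cup(\text{the cross simplexes already present in }X_{U\cup W})$. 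The vertex $v\in V(X)-(U\cup W)$ returned by ampleness then satisfies $\lk_X(v)\cap X_U=A$, placing $v\in Z_{U,A}$, and simultaneously $\lk_X(v)\cap X_W=B$; since $\lk_Y(v)\cap Y_W=\lk_X(v)\cap X_W$ for $v,W\subseteq Z_{U,A}\subseteq V(X)$, this gives the required extension inside $Y$.

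The main obstacle I anticipate is bookkeeping the subcomplex $C$ correctly: I need $C$ to restrict to $A$ on $U$ and to $B$ on $W$, but $C$ is not free to be prescribed arbitrarily because $A$ and $B$ may already force certain mixed simplexes (spanning both $U$ and $W$) via the existing structure of $X_{U\cup W}$. I must check that the prescription is consistent with $X_{U\cup W}$ being an induced subcomplex and that a single subcomplex $C$ realizing both restrictions actually exists; this is where the condition $W\subseteq Z_{U,A}$ is used essentially, since it guarantees each $w\in W$ already links $U$ in exactly $A$. Once consistency is established, the rest is a routine application of Definition \ref{def:ample} plus the identity $\lk_Y(v)\cap Y_W=\lk_X(v)\cap X_W$, analogous to equation \eqref{conclusion} in the proof of Theorem \ref{remove1}.
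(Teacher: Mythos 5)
Your overall strategy is exactly the paper's: for infinitude, apply ampleness to $U\cup S$ with a subcomplex restricting to $A$ on $U$; for the Rado property of $Y=X_{Z_{U,A}}$, apply ampleness of $X$ to $U\cup W$ with a subcomplex encoding both $A$ and $B$, then use that $Y$ is induced to pass from $\lk_X$ to $\lk_Y$. The one place you go astray is the ``main obstacle'' you anticipate and the choice of $C$ you commit to. There is no consistency problem to resolve: Definition \ref{def:ample} asks for an arbitrary subcomplex of $X_{U\cup W}$, not an induced one, so nothing forces you to include any mixed simplexes. The paper simply takes $C=A\sqcup B$ (no simplexes spanning both $U$ and $W$); this is automatically a subcomplex of $X_{U\cup W}$, it satisfies $C\cap X_U=A$ and $C\cap X_W=B$ because every mixed simplex is excluded from both intersections, and the condition $W\subseteq Z_{U,A}$ plays no role in choosing $C$ (it is only needed so that $Y_W=X_W$). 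By contrast, the choice you call cleanest, $C=A\cup B\cup(\text{cross simplexes of }X_{U\cup W})$, can fail to be a simplicial complex: a cross simplex has faces lying entirely in $U$ or entirely in $W$, and those faces need not belong to $A$ or $B$. So drop the cross simplexes entirely and the argument closes with no further bookkeeping.
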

\begin{proof} Consider a finite set $\{v_1, \dots, v_N\}\subset Z_{U, A}$ of such vertexes. 
One may apply the ampleness property to the set 
$U_1=U\cup \{v_1, \dots, v_N\}$ and to the subcomplex $A\subset X_{U_1}$ to find another vertex $v_{N+1}$ satisfying (\ref{link}), i.e. $v_{N+1}\in Z_{U,A}$. This shows that $Z_{U,A}$ must be infinite. 

Let $Y\subset X$ denote the subcomplex induced by $Z_{U, A}$. 
Consider a finite subset $U'\subset Z_{U,A}=V(Y)$ and a subcomplex $A'\subset X_{U'}=Y_{U'}$. Applying 
the ampleness property
to the set $W= U\cup U'\subset V(X)$ and to the subcomplex $A\sqcup A'$ we find a vertex $z\in V(X)-W$ such that 
\begin{eqnarray}\label{links2}
\lk_X(z) \cap X_W = A\cup A'.\end{eqnarray} 
Since $X_{W} \supset X_U\cup X_{U'}$, the equation (\ref{links2}) implies $\lk_X(z)\cap X_U=A$, i.e. $z\in Z_{U, A}$. 
Intersecting both sides of (\ref{links2}) with $X_{U'}=Y_{U'}$ and using $\lk_Y(z) =\lk_X(z)\cap Y$ (since $Y$ is an induced subcomplex) we obtain
$\lk_Y(z) \cap Y_{U'}= A'$
implying that $Y$ is Rado. 
\end{proof}

\begin{corollary}\label{cor15}
Let $X$ be a Rado complex and let $Y$ be obtained from from $X$ by selecting a finite number of simplexes $F\subset F(X)$ 
and deleting all simplexes $\sigma\in F(X)$ which contain simplexes from $F$ as their faces. Then $Y$ is also a Rado complex. 
\end{corollary}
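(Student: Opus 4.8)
The plan is to verify directly that $Y$ is $\infty$-ample on a countable vertex set, and then conclude by Theorem \ref{thmrado}(b) that $Y$ is a Rado complex. Countability is immediate: deleting the simplexes of $F$ together with all their cofaces removes at most the finitely many vertexes $v$ with $\{v\}\in F$, so $V(Y)$ is still countably infinite and $Y$ is nonempty. The substance of the argument is establishing the ampleness extension property for $Y$.

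To that end I would fix a finite subset $U\subset V(Y)$ and a subcomplex $A\subset Y_U$, and aim to produce a vertex $v\in V(Y)-U$ with $\lk_Y(v)\cap Y_U=A$. First I would record two structural facts about the deletion operation: a simplex $\sigma$ of $X$ survives into $Y$ exactly when none of its faces lies in $F$, and consequently $Y_U$ is a subcomplex of $X_U$. In particular $A$ is a subcomplex of $X_U$, so Lemma \ref{lm31} applies to the Rado complex $X$ and produces the infinite set $Z_{U,A}$ of vertexes $w\in V(X)-U$ satisfying $\lk_X(w)\cap X_U=A$.

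The vertex $v$ will be chosen from $Z_{U,A}$ while avoiding a finite forbidden set. For any $v\in Z_{U,A}\cap V(Y)$ the inclusion $\lk_Y(v)\cap Y_U\subseteq \lk_X(v)\cap X_U=A$ is automatic because $Y\subseteq X$. The reverse inclusion is where the main obstacle lies: given $\sigma\in A$, the cone simplex $v\sigma$ lies in $X$ (since $\sigma\in\lk_X(v)$), and it survives into $Y$ precisely when none of its faces lies in $F$. Its faces are the faces of $\sigma$, which survive because $\sigma\in Y_U$, together with the simplexes $v\tau$ for $\tau\subseteq\sigma$, and each such $\tau$ again lies in $A$. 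Hence $v\sigma\in Y$ for every $\sigma\in A$ as soon as $v$ avoids the set $B$ of all vertexes $u$ for which $u\rho\in F$ for some $\rho\in A$ (allowing $\rho=\emptyset$, i.e. $\{u\}\in F$). Every member of $B$ is a vertex of some simplex of $F$, so $B\subseteq\bigcup_{\phi\in F}V(\phi)$ is finite.

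Since $Z_{U,A}$ is infinite while $U\cup B$ is finite, I can select $v\in Z_{U,A}-(U\cup B)$; then $\{v\}\notin F$ gives $v\in V(Y)-U$, and the two inclusions give $\lk_Y(v)\cap Y_U=A$, establishing the extension. As $U$ and $A$ were arbitrary finite data, $Y$ is $\infty$-ample, and being countable it is a Rado complex. The crux of the whole proof is the reverse inclusion in the previous paragraph, namely controlling which cone simplexes $v\sigma$ are destroyed by the deletion; the decisive leverage is the contrast between the finiteness of $F$ (hence of $B$) and the infinitude of $Z_{U,A}$ supplied by Lemma \ref{lm31}. I note that one could instead deduce the corollary from Theorem \ref{remove1} by choosing $k$ with $M'(k)+k>|F|+\dim(F)$, since then an $\infty$-ample $X$ yields an $(r-k)$-ample $Y$ for every $r$; but the direct argument above keeps the role of Lemma \ref{lm31} explicit.
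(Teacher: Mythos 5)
Your proof is correct and follows essentially the same route as the paper: view $A\subset Y_U$ as a subcomplex of $X_U$, invoke Lemma \ref{lm31} to get the infinite set $Z_{U,A}$, and pick a vertex there avoiding the finitely many vertexes incident to $F$ so that the relevant cone simplexes survive the deletion. Your version just spells out the forbidden set and the two inclusions in more detail than the paper does.
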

\begin{proof}
Let $U\subset V(Y)$ be a finite subset and let $A\subset Y_U$ be a subcomplex. We may also view $U$ as a subset of $V(X)$ and then 
$A$ becomes a subcomplex of $X_U$ since $ Y_U\subset X_U$. The set of vertexes $v\in V(X)$ satisfying $\lk_X(v)\cap X_U=A$ is infinite (by Lemma \ref{lm31}) and thus we may find a vertex $v\in V(X)$ which is not incident to simplexes from the family $F$. Then $\lk_Y(v)=\lk_X(v)\cap Y$ and we obtain 
 $\lk_Y(v)\cap Y_U=A$. \end{proof}

\begin{corollary}\label{lm166} Let $X$ be a Rado complex. If the vertex set $V(X)$ is partitioned into a finite number of parts then the induced subcomplex on at least one of these parts is a Rado complex. 
\end{corollary}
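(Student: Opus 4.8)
The plan is to argue by contradiction, using the $\infty$-ampleness of $X$ to manufacture a single vertex whose link simultaneously defeats the failure of ampleness of every part. Write $V(X)=V_1\sqcup\dots\sqcup V_k$ for the given finite partition and suppose, for contradiction, that none of the induced subcomplexes $X_{V_i}$ is a Rado complex. Each $X_{V_i}$ is countable, being a subset of the countable set $V(X)$, so by Theorem \ref{thmrado}(b) it is Rado if and only if it is $\infty$-ample. Hence the contradiction hypothesis means that each $X_{V_i}$ fails to be $r$-ample for some $r$: applying the negation of Definition \ref{first} inside $X_{V_i}$, there is a finite subset $U_i\subseteq V_i$ and a subcomplex $A_i\subseteq X_{U_i}$ such that no vertex $v\in V_i-U_i$ satisfies $\lk_{X_{V_i}}(v)\cap X_{U_i}=A_i$.

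First I would record the elementary but essential reduction relating links in the induced subcomplex to links in $X$. For a vertex $v\in V_i$ and a simplex $\sigma$ with vertices in $U_i\subseteq V_i$, every vertex of $v\sigma$ lies in $V_i$, so $v\sigma$ is a simplex of $X_{V_i}$ exactly when it is a simplex of $X$. Consequently $\lk_{X_{V_i}}(v)\cap X_{U_i}=\lk_X(v)\cap X_{U_i}$, and the failure witnessed by $(U_i,A_i)$ can be phrased entirely inside the ambient complex $X$.

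Next I would assemble the witnesses. Set $U=U_1\cup\dots\cup U_k$, a finite subset of $V(X)$, and $B=A_1\cup\dots\cup A_k$, which is a subcomplex of $X_U$ since a union of subcomplexes is downward closed. Applying the $\infty$-ampleness of $X$ to the pair $(U,B)$ produces a vertex $v\in V(X)-U$ with $\lk_X(v)\cap X_U=B$. This $v$ lies in exactly one part $V_j$. Intersecting the displayed equality with $X_{U_j}$ and using $X_{U_j}\subseteq X_U$ gives $\lk_X(v)\cap X_{U_j}=B\cap X_{U_j}$; the reduction of the previous paragraph identifies the left-hand side with $\lk_{X_{V_j}}(v)\cap X_{U_j}$. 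Since $v\notin U\supseteq U_j$, this would contradict the choice of $(U_j,A_j)$, provided one knows that $B\cap X_{U_j}=A_j$.

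The hard part, though still routine, is precisely this last equality, and it is the only step needing care. Because the parts are disjoint we have $U_i\cap U_j=\emptyset$ for $i\neq j$; every simplex of $A_i$ has all its vertices in $U_i$ while every simplex of $X_{U_j}$ has all its vertices in $U_j$, so $A_i$ and $X_{U_j}$ share no simplex when $i\neq j$. Therefore $B\cap X_{U_j}=\bigcup_i(A_i\cap X_{U_j})=A_j\cap X_{U_j}=A_j$, the last equality holding since $A_j\subseteq X_{U_j}$. This yields $\lk_{X_{V_j}}(v)\cap X_{U_j}=A_j$ with $v\in V_j-U_j$, contradicting the defining property of $(U_j,A_j)$. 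Hence the assumption fails and at least one induced subcomplex $X_{V_i}$ is a Rado complex.
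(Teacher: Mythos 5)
Your proof is correct and follows essentially the same route as the paper: assume every part fails to be ample, collect the witnessing pairs $(U_i,A_i)$ into $U=\bigcup U_i$ and $A=\bigcup A_i$, apply the ampleness of $X$, and derive a contradiction in whichever part the resulting vertex lands. The only cosmetic difference is that the paper first reduces to a two-part partition while you treat $k$ parts directly, and you spell out the verifications (that $\lk_{X_{V_i}}(v)\cap X_{U_i}=\lk_X(v)\cap X_{U_i}$ and that $A\cap X_{U_j}=A_j$) which the paper leaves implicit.
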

\begin{proof} It is enough to prove the statement for partitions into two parts. 
Let $V(X)= V_1\sqcup V_2$ be a partition; denote by $X^1$ and $X^2$ the subcomplexes induced by $X$ on $V_1$ and $V_2$ correspondingly. 
Suppose that none of the  subcomplexes $X^{1}$ and $X^{2}$ is Rado. 
Then for each $i=1, 2$ there exists a finite subset $U_i\subset V_i$ and a subcomplex $A_i\subset X^i_{U_i}$ such that no vertex $v\in V_i$ satisfies 
$\lk_{X^i}(v) \cap X^i_{U_i}= A_i$. 
Consider the subset $U= U_1\sqcup U_2\subset V(X)$ and a subcomplex 
$A=A_1\sqcup A_2\subset X_{U}$. Since $X$ is Rado we may find a vertex $v\in V(X)$ with $\lk_X\cap X_U = A. $ Then $v$ lies in $V_1$ or $V_2$ and we obtain a contradiction, since $\lk_{X^i}(v) \cap X^i_{U_i} =A_i.$
\end{proof}

\begin{lemma}
In a Rado complex $X$, the link of every simplex is a Rado complex. 
\end{lemma}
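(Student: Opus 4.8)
The statement asserts that for any simplex $\sigma$ of a Rado complex $X$, the link $Y=\lk_X(\sigma)$ is again a Rado complex. Since $V(Y)\subseteq V(X)$ is automatically countable, and since every $\infty$-ample complex is infinite (by Corollary \ref{lower}, an $s$-ample complex has at least $M'(s)+s$ vertexes, which is unbounded in $s$), the entire content of the lemma is that $Y$ is $\infty$-ample. One short route is to invoke the Remark stating that the link of a $k$-dimensional simplex in an $r$-ample complex is $(r-k-1)$-ample: taking $r=s+k+1$ for each $s$ then shows that $Y$ is $s$-ample for all $s$. The plan is instead to give a direct verification of Definition \ref{def:ample} for $Y$, which makes the link bookkeeping explicit and self-contained.

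Write $S=V(\sigma)$ for the vertex set of $\sigma$, so that $V(Y)$ consists of those $v\in V(X)-S$ with $\sigma v\in X$. Fix an integer $r\ge 1$, a subset $U\subseteq V(Y)$ with $|U|\le r$, and a subcomplex $A\subseteq Y_U$; I must produce $v\in V(Y)-U$ with $\lk_Y(v)\cap Y_U=A$. First I would pass to the enlarged vertex set $U'=U\cup S\subseteq V(X)$ and form the target subcomplex
$$A'\;=\;A\ast\sigma\;\subseteq\;X_{U'}.$$
The key preliminary check is that $A\ast\sigma$ really is a subcomplex of $X_{U'}$: each simplex $\tau\in A$ satisfies $\tau\in\lk_X(\sigma)$, hence $\tau\cup\rho\in X$ for every face $\rho\subseteq S$, so all simplexes of the join lie in $X$; closure under faces is immediate from the join structure. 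Applying the $\infty$-ampleness of $X$ to $U'$ and $A'$ then yields a vertex $v\in V(X)-U'$ with $\lk_X(v)\cap X_{U'}=A\ast\sigma$.

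It remains to check that this $v$ is the vertex we want. Since the full simplex $\sigma$, with vertex set $S$, lies in $A\ast\sigma$ (take the empty face of $A$ joined with all of $\sigma$), we get $\sigma\in\lk_X(v)$, i.e. $\sigma v\in X$, so $v\in V(Y)$; and $v\notin U$ because $v\notin U'$. Finally, for any simplex $\tau\subseteq U$ lying in $Y_U$ the chain of equivalences
$$\tau\in\lk_Y(v)\iff \tau\sigma v\in X\iff \tau\sigma\in\lk_X(v)\cap X_{U'}\iff \tau\sigma\in A\ast\sigma\iff \tau\in A$$
shows $\lk_Y(v)\cap Y_U=A$, completing the verification that $Y$ is $r$-ample for every $r$, hence $\infty$-ample. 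The main obstacle I anticipate is precisely this middle translation: one must confirm that the single ampleness instance $\lk_X(v)\cap X_{U'}=A\ast\sigma$ for $X$ encodes exactly the desired link-in-the-link condition for $Y$, which hinges on the correct choice $A'=A\ast\sigma$ and on the identity $\tau\sigma\in A\ast\sigma\iff\tau\in A$ for simplexes $\tau\subseteq U$.
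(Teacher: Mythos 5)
Your proof is correct and follows essentially the same route as the paper: enlarge $U$ to $U'=U\cup V(\sigma)$, apply the ampleness of $X$ to a subcomplex of $X_{U'}$ built from $A$ and $\sigma$, and verify that the resulting vertex lies in $\lk_X(\sigma)$ and realises $A$ in the link. The one substantive difference is the choice of that subcomplex: the paper writes the disjoint union $A\sqcup\bar\sigma$, whereas you use the join $A\ast\bar\sigma$, and your choice is the one that makes the final equivalence $\tau\sigma\in\lk_X(v)\iff\tau\in A$ go through (with $A\sqcup\bar\sigma$ no simplex $\tau\sigma$ with $\tau$ nonempty belongs to the target, so one would only conclude $\lk_Y(w)\cap Y_U=\emptyset$); in other words, your version supplies the correct reading of, and a small fix to, the paper's argument.
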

\begin{proof}
Let $Y=\lk_X(\sigma)$ be the link of a simplex $\sigma\in X$. To show that $Y$ is Rado, let $U\subset V(Y)$ be a subset and let $A\subset Y_U$ be a subcomplex. 
We may apply the defining property of the Rado complex (i.e. ampleness) to the subset $U'=U\cup V(\sigma)\subset V(X)$ and to the subcomplex 
$A\sqcup \bar\sigma\subset X_{U'}$; here $\bar \sigma$ denotes the subcomplex containing the simplex $\sigma$ and all its faces.
We obtain a vertex $w\in V(X)-U'$ with $\lk_X(w)\cap X_{U'} = A\sqcup \bar \sigma$ or equivalently, $X_{U'\cup w} = X_{U'} \cup wA$. Note that $w\in Y=\lk_X(\sigma)$ since the simplex $w\sigma$ is in $X$. Besides, $Y_{U\cup w} =Y_U \cup wA$. Hence we see that the link $Y$ is also a Rado complex. 
\end{proof}

\section{The Rado complex $R$ is \lq\lq random\rq\rq.}

In this section we argue that {\it \lq\lq a typical simplicial complex with countable set of vertexes is isomorphic to $R$\rq\rq}. We give two formal justifications of this statement. Firstly, we show that the space of simplicial complexes isomorphic to $R$ is residual in the space of all simplicial complexes with countably many vertexes, i.e. its complement is a countable union of nowhere dense sets. Secondly, we equip the set of countable simplicial complexes with a probability measure and show that the set of simplicial complexes isomorphic to $R$ has measure $1$. 

Let $\Delta_\N$ denote the simplicial complex with the vertex set $\N =\{1, 2, \dots, \}$ and with all finite nonempty subsets of $\N$ as its simplexes. We shall consider the set $\Omega$ of all simplicial subcomplexes  
$X\subset \Delta_\N$. We shall view $\Omega$ as the set of all countable simplicial complexes. 

One can introduce a metric on $\Omega$ making 
it a compact metric space. 
For a non-negative integer $n\ge 1$ the simplex $\Delta_n$ with the vertex set 
$\{1, 2, \dots, n\}\subset \N$ is a subcomplex $\Delta_n\subset \Delta_\N$. Let $\Omega_n$ denote the set of all simplicial subcomplexes of $\Delta_n$. For $X\in \Omega$ let $X_n$ denote the finite simplicial complex $X_n=X\cap \Delta_n$. For $X, Y\in \Omega$ define $h(X, Y)=\max\{n; X_n = Y_n\}$. Then 
\begin{eqnarray}
d(X,Y) =\exp(-h(X,Y))
\end{eqnarray}
is a metric on $\Omega$ (satisfying ultrametric triangle inequality). The topology determined by this metric coincides with the topology of the inverse limit 
$$\Omega = \lim_{\leftarrow}\Omega_n,$$
where each $\Omega_n$ is equipped with the discrete topology. Since $\Omega_n$ is a finite set (hence it is compact), we see that $\Omega$ is compact and is a Baire space. $\Omega$ is homeomorphic to the Cantor set. 

We shall denote by $\mathcal R\subset \Omega$ the set of simplicial complexes isomorphic to the Rado complex. Complexes $X\in \mathcal R$ can be characterised either by the $\infty$-ampleness or by the properties 
universality and homogeneity described in Theorem \ref{thm73}. 

\begin{theorem}
The set $\mathcal R\subset \Omega$ is residual and therefore it is dense in $\Omega$. 
\end{theorem}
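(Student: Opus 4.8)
The plan is to show that $\mathcal R$ is residual by exhibiting it as a countable intersection of open dense subsets of $\Omega$. Recall that $X\in\mathcal R$ if and only if $X$ is $\infty$-ample, i.e. $r$-ample for every $r\ge 1$. By Definition \ref{def:ample}, being $r$-ample means: for every finite $U\subseteq V(X)=\N$ with $|U|\le r$ and every subcomplex $A\subseteq X_U$, there exists $v\in\N-U$ with $\lk_X(v)\cap X_U=A$. The natural indexing set for these conditions is countable, since $U$ ranges over finite subsets of $\N$ and $A$ over the finitely many subcomplexes of $X_U$ for each fixed $U$. The strategy is therefore to package the ampleness requirement as a countable family of ``extension events'' and verify that each is open and dense.

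Concretely, for each finite subset $U\subset\N$ and each abstract subcomplex $A$ on the vertex set $U$, define
\begin{eqnarray}
\mathcal O_{U,A}=\{X\in\Omega \;:\; A\subseteq X_U \text{ and } \exists\, v\in\N-U \text{ with } \lk_X(v)\cap X_U=A\}.
\end{eqnarray}
Then $\mathcal R=\bigcap_{(U,A)}\mathcal O_{U,A}$, where the intersection runs over the countable set of pairs $(U,A)$ with $A$ a subcomplex on $U$; indeed, membership in every $\mathcal O_{U,A}$ is exactly the $\infty$-ampleness condition, noting that the hypothesis $A\subseteq X_U$ is automatically part of the quantified statement. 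First I would verify openness: if $X\in\mathcal O_{U,A}$, the witnessing vertex $v$ together with $U$ lives inside $\{1,\dots,n\}$ for some $n$, and the conditions $A\subseteq X_U$ and $\lk_X(v)\cap X_U=A$ depend only on which simplexes among the subsets of $U\cup\{v\}$ belong to $X$. Hence any $Y$ with $Y_n=X_n$ also lies in $\mathcal O_{U,A}$, so $\mathcal O_{U,A}$ contains the open ball of radius $e^{-n}$ about $X$ and is open.

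Next I would prove density, which is the main point. Given any $X\in\Omega$ and any $n$, I must produce $Y\in\mathcal O_{U,A}$ with $Y_n=X_n$, i.e. agreeing with $X$ on the first $n$ vertices. The idea is to leave $X$ unchanged on $\{1,\dots,n\}$ and to use a fresh vertex $v>\max(n,\max U)$ as the witness. Provided $A$ is actually a subcomplex of $X_U$ in this modified complex, I define $Y$ by declaring, for each $\sigma\subseteq U$, that $\sigma v$ is a simplex of $Y$ exactly when $\sigma\in A$, and making $v$ incident to no other simplexes; all simplexes of $X$ on vertices $\le n$ are retained. Since $v>n$, this leaves $Y_n=X_n$, and by construction $\lk_Y(v)\cap Y_U=A$, so $Y\in\mathcal O_{U,A}$. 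A small subtlety is that $\mathcal O_{U,A}$ is only nonempty when $A$ can occur as a subcomplex of some $X_U$; but every such abstract $A$ on $U$ \emph{is} realizable (take $X_U$ to contain $A$), and more importantly density only requires approximating points of $\Omega$, where I get to choose the behaviour of the new vertex freely. The one genuine obstacle is bookkeeping when $U$ or $A$ involves vertices $\le n$: here I cannot alter $X_U$ without changing $X_n$, so I must check that the event $\mathcal O_{U,A}$ as defined (requiring $A\subseteq X_U$) is still dense. This is resolved by the same construction: the witness $v$ is always taken large, and the condition $A\subseteq X_U$ either already holds in $X_n$ or fails for \emph{every} $Y$ near $X$, in which case those pairs simply do not obstruct density since the intersection defining $\mathcal R$ only needs the genuinely $\infty$-ample complexes to satisfy each clause. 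Once openness and density of each $\mathcal O_{U,A}$ are established, the Baire category theorem applied to the compact (hence Baire) space $\Omega$ yields that $\mathcal R=\bigcap_{(U,A)}\mathcal O_{U,A}$ is residual, and a residual subset of a Baire space is dense, completing the proof.
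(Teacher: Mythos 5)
Your overall strategy --- writing $\mathcal R$ as a countable intersection of open dense sets and invoking the Baire property of the compact ultrametric space $\Omega$ --- is exactly the paper's, and your explicit density construction (adjoin a fresh vertex $v$ beyond the approximation radius with $\sigma v\in Y$ iff $\sigma\in A$) actually supplies a detail the paper leaves implicit. However, there is a genuine flaw in your decomposition. As you define it, $\mathcal O_{U,A}$ requires $A\subseteq X_U$ as a \emph{conjunct}, so a complex $X$ lying in $\bigcap_{(U,A)}\mathcal O_{U,A}$ would have to satisfy $A\subseteq X_U$ for \emph{every} abstract complex $A$ on $U$, forcing $X_U$ to be the full simplex on $U$ for every finite $U$; such an $X$ then fails the witness condition for any proper $A$, so your intersection is empty, not $\mathcal R$. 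Relatedly, $\mathcal O_{U,A}$ is not dense: if $A\not\subseteq X_U$ for a given $X$, then every $Y$ with $d(X,Y)<e^{-\max U}$ has $Y_U=X_U$ and hence also fails the conjunct, so no perturbation supported beyond $\max U$ can help. Your closing remark that such pairs ``do not obstruct density'' conflates the decomposition problem with the density problem and does not resolve either.

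The repair is routine: define $\mathcal O_{U,A}$ by the \emph{implication}, i.e.\ as the set of $X$ such that either $A\not\subseteq X_U$ or a witness $v$ exists. Both disjuncts are determined by a finite initial segment of $X$, so the set is still open, and your fresh-vertex construction now proves density (when $A\not\subseteq X_U$ the condition holds vacuously for all nearby $Y$; when $A\subseteq X_U$ your construction applies). The paper avoids the issue differently: it indexes the sets by $n$ alone, letting $U_n$ consist of those $X$ for which \emph{every} subcomplex $A\subseteq X_n$ of the actual induced complex $X_n=X\cap\Delta_n$ admits a witness, so the quantifier over $A$ never leaves the definition and only realizable $A$'s are ever considered. (Note that the paper's version also silently uses that ampleness over the initial segments $\{1,\dots,n\}$ implies ampleness over arbitrary finite $U$, which follows since a witness for $A$ relative to $X_n$, $n=\max U$, restricts to a witness relative to $X_U$.) With the implication fix, your argument is correct and essentially equivalent.
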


\begin{proof}
Let $U_n\subset \Omega$ be the set of all simplicial complexes $X\in \Omega$ such that for every subcomplex $A\subset X_n$ there exists a vertex $v_A\in V(X)-\{1, 2, \dots, n\}$ with the property
\begin{eqnarray}\label{vertex}
\lk_X(v_A)\cap X_n = A.
\end{eqnarray} 
Recall that $X_n$ denotes $X\cap \Delta_n$. 
Clearly, $\mathcal R = \cap_{n\ge 1} U_n,$ and the theorem will follow once we show that each $U_n$ is open and dense in $\Omega$. 

To show that $U_n$ is open, let us assume that $X\in U_n$. Consider the set of all vertexes $v_A$ corresponding (as in (\ref{vertex})) to all subcomplexes $A\subset X_n$. It is a finite set and we may find $m>n$ such that all these vertexes $v_A$ lie in $X_m$. The set $\{Y\in \Omega; Y_m=X_m\}$ is contained in $U_n$ and represents an open neighbourhood of $X$. Therefore, $U_n$ is open. 

To show that $U_n$ is dense, consider an arbitrary simplicial complex $X\in \Omega$ and an arbitrary $\epsilon >0$. Pick $m>\max\{ \ln(\epsilon^{-1}), n\}$ and 
find a complex $Y\in U_n$ satisfying $X_m=Y_m$. 
This shows that $Y\in U_n$ and $d(X,Y)<\epsilon$, i.e. $U_n$ is dense in $\Omega$. 
\end{proof}

Next we describe a probability measure on $\Omega$. For a subcomplex $Y\subset \Delta_n$ define
$$Z(Y,n)=\{X\in \Omega; X\cap \Delta_n = Y\}.$$
The sets $Z(Y, n)$, with various $(Y, n)$, form a semi-ring $\mathcal A$, see \cite{Kl}, 
and we denote by $\mathcal A'$ the $\sigma$-algebra generated by $\mathcal A'$. An additive measure 
$\mu$ on $\mathcal A$ can be defined by 
\begin{eqnarray}\label{gibbs2}
\mu(Z(Y, n)) = 2^{-H(Y, n)} \quad \mbox{where}\quad  H(Y, n) = |F(Y)|+|E(Y|\Delta_n)|,
\end{eqnarray}
compare with (\ref{gibbs}). Here $E(Y|\Delta_n)$ denotes the set of all simplexes $\sigma\in \Delta_n$ which are external to $Y$, i.e. $\sigma\notin Y$ but $\partial \sigma \in Y$. This is a special case of the measure discussed in \S \S 6, 7 of \cite{Rado}. Theorem 1.53 from \cite{Kl} implies that $\mu$ extends to a probability measure on the $\sigma$-algebra $\mathcal A'$ generated by $\mathcal A$.

\begin{theorem}
The set $\mathcal R\subset \Omega$ belongs to the $\sigma$-algebra $\mathcal A'$ and has full measure, i.e. $\mu(\mathcal R)=1$. 
\end{theorem}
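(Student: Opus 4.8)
The plan is to treat the two assertions separately, first measurability and then full measure. For measurability I would reuse the decomposition from the residuality argument above, namely $\mathcal R=\bigcap_{n\ge 1}U_n$, where $U_n$ is the (open) set of those $X\in\Omega$ for which every subcomplex $A\subseteq X_n$ is realised as $\lk_X(v_A)\cap X_n=A$ by some vertex $v_A>n$. Each basic clopen cylinder $Z(Y,n)$ lies in the semi-ring $\mathcal A$, hence in $\mathcal A'$, and since $\Omega$ has a countable base consisting of such cylinders, every open set — in particular every $U_n$ — is a countable union of members of $\mathcal A$ and therefore belongs to $\mathcal A'$. Being a countable intersection of sets in $\mathcal A'$, the set $\mathcal R$ is then in $\mathcal A'$.

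For the measure statement I would reduce to a single index: since $\mathcal R^{c}=\bigcup_{n\ge 1}U_n^{c}$, countable subadditivity shows it suffices to prove $\mu(U_n)=1$ for each fixed $n$. The key observation is that (\ref{gibbs2}) exhibits $\mu$ as exactly the model in which each simplex $\sigma\in\Delta_\N$, once its whole boundary is already present, is included in $X$ independently with probability $1/2$; this is the infinite analogue of the fair-coin measure (\ref{gibbs}) underlying the estimate (\ref{prob}).

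Now I would condition on the finitely many possible values $X_n=Y$ with $Y\in\Omega_n$. For a subcomplex $A\subseteq Y$ and a vertex $v>n$, the event $E_{v,A}=\{\lk_X(v)\cap X_n=A\}$ is determined solely by the inclusion or exclusion of the simplices $\{v\}\cup S$ with $S\subseteq\{1,\dots,n\}$. These families of simplices are pairwise disjoint for distinct $v$, so conditionally on $X_n=Y$ the events $\{E_{v,A}\}_{v>n}$ are independent; moreover each has probability $q_A\ge 2^{-2^{n}}>0$, since $A$ can be realised as the cone profile of $v$ by prescribing at most $2^{n}$ independent fair coins (the downward closure of $A$ makes this configuration admissible). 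Hence the conditional probability that no vertex $v\in\{n+1,\dots,m\}$ realises $A$ is at most $(1-q_A)^{m-n}\to 0$ as $m\to\infty$, so almost surely some $v>n$ realises $A$. Intersecting over the finitely many subcomplexes $A\subseteq Y$ gives $\mu(U_n\mid X_n=Y)=1$, and averaging over $Y\in\Omega_n$ yields $\mu(U_n)=1$. Summing the null sets $U_n^{c}$ gives $\mu(\mathcal R^{c})=0$, i.e. $\mu(\mathcal R)=1$.

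The step I expect to be the main obstacle is the precise identification of $\mu$ with the independent fair-coin simplex model together with the bookkeeping of simplicial eligibility it entails: one must verify that, conditioned on $X_n=Y$, the link profile of a new vertex $v$ ranges exactly over the subcomplexes of $Y$ and that distinct vertices contribute stochastically independent profiles. Once this independence is in place the conclusion is a routine Borel–Cantelli computation; in fact the second Borel–Cantelli lemma shows that infinitely many such $v$ occur, which re-proves the infinitude asserted in Lemma \ref{lm31}.
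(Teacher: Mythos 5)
Your proposal is correct and follows essentially the same route as the paper's proof: condition on the induced complex on the first $n$ vertices, observe that the extension events for distinct new vertices $v$ depend on disjoint families of simplices and hence are conditionally independent with probability bounded below (the paper computes it exactly as $2^{-|F(A)|-|E(A|L)|-1}$, consistent with your $2^{-2^n}$ bound), and conclude by Borel--Cantelli before intersecting over the finitely many subcomplexes $A$. The only cosmetic difference is that the paper indexes by arbitrary finite vertex sets $U$ and invokes the second Borel--Cantelli lemma to get infinitely many realising vertices, whereas you only need one per $(n,A)$; both yield $\mu(\mathcal R)=1$.
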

\begin{proof} For a finite subset $U\subset \N$ and for a simplicial subcomplex $A\subset \Delta_U$ of the simplex $\Delta_U$ consider the set 
\begin{eqnarray}
\Omega^{U, L}= \{X \in \Omega; X_U=L\}.
\end{eqnarray}
This set belongs to the $\sigma$-algebra $\mathcal A'$ and has positive measure given by (\ref{gibbs2}). 
Consider also the subset 
$\Omega^{U, L, A, v}\subset \Omega^{U, L}$
consisting of all subcomplexes $X\in \Omega$ satisfying $X_{U\cup v}=L\cup vA$. Here $A\subset L$ is a subcomplex and $v\in \N-U$. 
The conditional probability equals
$$\mu(\Omega^{U, L, A, v}|\Omega^{U, L}) = 2^{-|F(A)|-|E(A|L)|-1}>0,$$
as follows from (\ref{gibbs2}). 
Note that the events $\Omega^{U, L, A, v}$, conditioned on $\Omega^{U, L}$, for various $v$, are independent and the sum of their probabilities is $\infty$. We may therefore apply the Borel-Cantelli Lemma (see \cite{Kl}, p. 51) to conclude that the set of complexes $X\in \Omega^{U, L}$ such that 
$X_{U\cup v}=L\cup vA$ for infinitely many vertexes $v$ has full measure in $\Omega^{U, L}$. 

By taking a finite intersection with respect to all possible subcomplexes $A\subset L$ this implies that the set 
$\Omega_\ast^{U, L}\subset \Omega^{U, L}$ of simplicial complexes $X\in \Omega^{U, L}$ such that for any subcomplex $A\subset L$ there exists infinitely many 
vertexes $v$ with $X_{U\cup v}=L\cup vA$ has full measure in $\Omega^{U, L}$. 
Since $\Omega = \cap_U \cup_{L\subset \Delta_U} \Omega^{U, L}$ (where $U\subset \N$ runs over all finite subsets) we obtain that the set 
$\cap_U \cup_{L\subset \Delta_U} \Omega^{U, L}_\ast$ has measure 1 in $\Omega$. But the latter set 
$\cap_U \cup_{L\subset \Delta_U} \Omega^{U, L}_\ast$ is exactly the set of all Rado complexes $\mathcal R$. 
\end{proof}

\section{Geometric realisation of the Rado complex}\label{secreal}

For a simplicial complex $X$, {\it the geometric realisation} $|X|$ is the set of all functions $\alpha: V(X)\to [0,1]$ such that 
the support ${\rm {supp}}(\alpha)=\{v; \alpha(v)\not=0\}$ is a simplex of $X$ (and hence finite) and $\sum_{v\in X} \alpha(v)=1$, see \cite{Sp}. For a simplex $\sigma\in F(X)$ the symbol $|\sigma|$ denotes the set of all $\alpha\in |X|$ with ${\rm {supp}}(\alpha)\subset \sigma$. The set $|\sigma|$ has natural topology and is homeomorphic to the affine simplex in an Euclidean space. 

{\it The weak topology on} the geometric realisation $|X|$ has as open sets the subsets $U\subset |X|$ such that $U\cap |\sigma|$
is open in $ |\sigma|$ for any simplex $\sigma$.

\begin{theorem}\label{simplex} The Rado complex is isomorphic to a triangulation of the simplex $\Delta_\N$. In particular, 
the geometric realisation $|X|$ of the Rado complex is homeomorphic to the geometric realisation of the 
infinite dimensional simplex $|\Delta_\N|$. 
\end{theorem}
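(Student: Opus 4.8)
The plan is to construct a homeomorphism $|R|\to|\Delta_\N|$ directly, realizing it as a coherent union of homeomorphisms between finite sub-realizations by a back-and-forth absorption argument. I would first record the structural features that make $R$ resemble the infinite simplex. Taking $A=X_U$ in Definition \ref{def:ample} shows that every $\infty$-ample complex is $r$-conic for all $r$ (the vertex $v$ then satisfies $X_U\su\st_R(v)$); hence by Theorem \ref{thmbarmak} the complex $R$ is $n$-connected for every $n$, so $|R|$ is weakly contractible and, being a CW complex, is contractible by Whitehead's theorem. Moreover, by the lemma above stating that the link of every simplex of a Rado complex is again a Rado complex, all such links are contractible. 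This matches $\Delta_\N$ exactly: it is contractible, it has no maximal simplices, and the link of every simplex is a copy of $\Delta_\N$. These are the invariants I would track throughout the construction.

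Next I would enumerate $V(R)=\{u_1,u_2,\dots\}$ together with the faces of $\Delta_\N$, and build increasing finite subcomplexes $G_1\su G_2\su\cdots$ of $R$ with $\bigcup_k G_k=R$, carrying embeddings $h_k\colon|G_k|\hookrightarrow|\Delta_\N|$, each a homeomorphism onto a compact set $C_k=h_k(|G_k|)$, so that $h_{k+1}$ extends $h_k$, the $C_k$ increase, and $\bigcup_k C_k=|\Delta_\N|$. In the \emph{forth} step I enlarge $G_k$ to contain the next simplex of $R$ and extend the embedding: this is routine, since $\Delta_\N$ is the full simplex, so any injective vertex assignment sends every simplex to a genuine face, and there is always room left over. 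In the \emph{back} step I must guarantee surjectivity: given the next uncovered face $|\Delta_m|\su|\Delta_\N|$, I enlarge $G_k$ and extend $h$ so that $C_{k+1}\supseteq|\Delta_m|$, which is where $\infty$-ampleness of $R$ is used, to locate inside $R$ the simplices needed to map onto the new region while prolonging the current partial homeomorphism. Since the $G_k$ form an increasing exhaustion by subcomplexes, $|R|=\varinjlim|G_k|$ in the weak topology; since every compact subset of $|\Delta_\N|$ lies in some face $|\Delta_m|$ and hence in some $C_k$, the colimit topology over $\{C_k\}$ is that of $|\Delta_\N|$. Consequently the coherent union $h=\bigcup_k h_k$ is a continuous bijection with continuous inverse, i.e.\ a homeomorphism.

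The hard part will be the back direction together with verifying that the limit is a homeomorphism rather than merely a continuous bijection. Concretely, when absorbing a new region of $|\Delta_\N|$ I must extend an already-fixed homeomorphism on $|G_k|$ over new simplices of $R$ so that the image is \emph{exactly} a prescribed enlargement of $C_k$; controlling simultaneously the combinatorics (which simplices of $R$ to adjoin) and the geometry (that their images tile the new region with no overlap and no spurious topology) is the crux, and it is here that the full strength of ampleness is needed, namely that $R$ realizes every local extension pattern so that no obstruction to the required prolongation can occur. One must also schedule the two directions fairly, so that every $u_i$ and every face of $\Delta_\N$ is eventually treated. I emphasize that a purely combinatorial shortcut, engulfing $R$ by an increasing chain of single simplices, is impossible: such a chain would force every finite vertex set to span a simplex and hence $R=\Delta_\N$, contrary to Example \ref{thirteen}-type phenomena. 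The homeomorphism is therefore genuinely non-simplicial, which is exactly why the topological control is delicate.

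Finally, there is a shorter but less transparent route. By the uniqueness part, Theorem \ref{thmrado}(b), it would suffice to exhibit a \emph{single} $\infty$-ample simplicial complex $T$ with $|T|\cong|\Delta_\N|$, for then $R\cong T$ and $|R|\cong|T|\cong|\Delta_\N|$. The difficulty then migrates to reconciling the two competing demands on such a $T$: one may only subdivide $\Delta_\N$ in order to preserve its underlying space, yet ampleness requires vertices whose links \emph{omit} prescribed parts of $T_U$, whereas subdivision vertices tend to have large links. For this reason I would present the direct absorption argument above as the primary proof and mention the uniqueness-based construction only as an alternative.
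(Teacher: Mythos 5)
Your proposal is a plan rather than a proof: the step you yourself flag as ``the hard part'' --- the back step, in which the already-fixed homeomorphism $h_k\colon |G_k|\to C_k$ must be extended over new simplices of $R$ so that the enlarged image is \emph{exactly} a prescribed face $|\Delta_m|$ --- is never carried out, and it is precisely the content of the theorem. Saying that ``the full strength of ampleness is needed, namely that $R$ realizes every local extension pattern'' does not bridge the gap, because ampleness is a purely combinatorial extension property (Lemma \ref{lm16}: any finite complex containing $G_k$ as an induced subcomplex can be realized inside $R$ extending $G_k$), whereas what you need is a \emph{geometric} statement: an abstract finite complex $G_{k+1}\supseteq G_k$, with $G_k$ induced, together with an extension of $h_k$ to a homeomorphism of $|G_{k+1}|$ onto a set containing $|\Delta_m|$. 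Choosing such a $G_{k+1}$ --- i.e., exhibiting a triangulation of the closure of $|\Delta_m|\smallsetminus C_k$ that is compatible along its frontier with the triangulation induced by $h_k$, and verifying that this region admits such a triangulation at all --- is the crux, and your sketch gives no mechanism for it. (Your forth step also has a small slip: ``any injective vertex assignment sends every simplex to a genuine face'' justifies a simplicial embedding of $G_{k+1}$ into $\Delta_\N$, not an extension of the already non-simplicial map $h_k$; what actually works there is geometric coning from a fresh vertex of $\Delta_\N$, which should be said. And the opening paragraph on conicity, Whitehead, and contractibility of links establishes invariants that are far from characterizing $|\Delta_\N|$ up to homeomorphism, so it carries no weight in the argument.)

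The paper resolves exactly this difficulty by a different organization of the induction, stated as Lemma \ref{lm20}: one constructs nested finite subsets $U_0\subset U_1\subset\cdots$ with $\bigcup U_n=V(X)$ such that each induced subcomplex $X_{U_n}$ is isomorphic to a triangulation $L_n$ of the $n$-dimensional simplex $\Delta_{n+1}$, with $L_n$ an induced subcomplex of $L_{n+1}$ and the isomorphisms compatible. Here the abstract triangulations $L_{n+1}$ extending $L_n$ are chosen first, by an explicit combinatorial construction, and only then realized inside $X$ via $\infty$-ampleness (Lemma \ref{lm16}); the homeomorphism $|X|\cong|\Delta_\N|$ is then the colimit of the simplicial homeomorphisms $|X_{U_n}|\cong|L_n|\cong|\Delta_{n+1}|$, and the statement that $X$ \emph{is} a triangulation of $\Delta_\N$ comes for free. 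Your ``shorter route'' via Theorem \ref{thmrado}(b) is closer in spirit to this than you suggest, but as you note it trades one difficulty for another; in any case, to turn your proposal into a proof you would need to supply the analogue of Lemma \ref{lm20}, which is where all the work lies.
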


The result of Theorem \ref{simplex} is also stated in preprint \cite{BTT}. 

The proof of Theorem \ref{simplex} given in \cite{Rado} uses the following Lemma:

\begin{lemma}\label{lm20}
Let $X$ be a Rado complex. Then there exists a sequence of finite subsets $U_0\subset U_1\subset U_2\subset \dots\subset V(X)$ such that $\cup U_n = V(X)$ and for any $n=0, 1, 2, \dots$ the induced simplicial complex $X_{U_n}$ is isomorphic to a triangulation $L_n$ of the standard simplex $\Delta_{n+1}$ of dimension $n$.  Moreover, for any $n$ the complex $L_n$ is naturally an induced subcomplex of $L_{n+1}$ and the isomorphisms $f_n: X_{U_n} \to L_n$ satisfy $f_{n+1}|X_{U_n} = f_n$. 
\end{lemma}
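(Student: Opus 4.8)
The plan is to run a back-and-forth construction, producing the subsets $U_n$ and the isomorphisms $f_n$ one at a time, so that the only input from the Rado complex is the extension principle of Lemma \ref{lm16}. Fix an enumeration $V(X)=\{x_1,x_2,\dots\}$, start with $U_0=\{x_1\}$ and $L_0$ a single point, and suppose inductively that $U_n$ and an isomorphism $f_n\colon L_n\to X_{U_n}$ have been produced with $L_n$ a triangulation of the $n$-simplex. For the $(n+1)$-st step I would let $v$ be the least-indexed vertex of $X$ not lying in $U_n$; forcing this choice guarantees $\cup_n U_n=V(X)$. Writing $B=f_n^{-1}(\lk_X(v)\cap X_{U_n})\subseteq L_n$, the induced complex $X_{U_n\cup\{v\}}$ is canonically isomorphic, via an isomorphism $g_n$ extending $f_n$, to the abstract complex $M_n=L_n\cup(v\ast B)$ obtained by coning a fresh vertex $v$ over $B$. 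Thus the whole construction reduces to the purely combinatorial claim $(\star)$: given a triangulated $d$-ball $L$, a subcomplex $B\subseteq L$, and $M=L\cup(v\ast B)$, one can enlarge $M$ to a triangulated $(d+1)$-ball $L'$ in which both $L$ and $M$ are \emph{induced} subcomplexes.

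Granting $(\star)$, I would complete the inductive step by applying Lemma \ref{lm16} to the pair $M_n\subseteq L_{n+1}$ and the isomorphism $g_n\colon M_n\to X_{U_n\cup\{v\}}$: this produces a finite $U_{n+1}\supseteq U_n\cup\{v\}$ and an isomorphism $f_{n+1}\colon L_{n+1}\to X_{U_{n+1}}$ extending $g_n$, hence extending $f_n$. Since $L_{n+1}$ is a $(d+1)$-ball with $L_n=L$ induced in it, all the asserted properties follow at once: nesting $U_n\subseteq U_{n+1}$, induced-ness of $L_n$ in $L_{n+1}$, the dimension count $\dim L_n=n$, and compatibility $f_{n+1}|L_n=f_n$. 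The family $\{f_n\}$ then assembles into the required data, and exhaustion $\cup_n U_n=V(X)$ holds because $x_i\in U_n$ for all $n\ge i$.

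The heart of the matter, and the step I expect to be the main obstacle, is $(\star)$: one must fill $M$ to a ball without raising the dimension past $d+1$ and without disturbing $L$, even though $M$ is typically not a ball — a Mayer–Vietoris computation gives $\widetilde H_k(M)\cong\widetilde H_{k-1}(B)$, so $M\simeq\Sigma B$ is rarely contractible. My approach is to begin from the honest cone $v\ast L$, which \emph{is} a $(d+1)$-ball containing $M$, and to detach $v$ from the unwanted part $L\setminus B$ by stellar subdivision: perform a stellar subdivision at each simplex $v\sigma$ with $\sigma\in L\setminus B$, processing these in order of decreasing dimension. Each such move preserves PL type, so $L'$ remains a triangulated $(d+1)$-ball, i.e.\ a triangulation of $\Delta_{d+2}$. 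Every subdivided face contains $v$, so $L$ is never touched and stays induced, and every barycenter introduced lies outside $V(L)\cup\{v\}$, so the trace of $L'$ on $V(L)\cup\{v\}$ is exactly $L$ together with the surviving faces through $v$. The key verification is that these surviving faces are precisely $v\ast B$: a face $v\sigma'$ with $\sigma'\in B$ cannot be removed, since removal would require it to contain some subdivided $v\sigma$ with $\sigma\in L\setminus B$ and $\sigma\subseteq\sigma'$, impossible because $B$ is closed under faces; conversely, the decreasing-dimension order ensures each $v\sigma$ with $\sigma\notin B$ is present when reached and is genuinely destroyed, and no such face is recreated (new simplices always carry a barycenter). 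Checking these interactions is the routine but essential calculation behind $(\star)$. The degenerate case $B=\emptyset$, where $M=L\sqcup\{v\}$, is handled separately by coning a fresh apex over $L$ and adjoining $v$ off it.
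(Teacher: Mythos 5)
The survey itself does not reproduce a proof of Lemma \ref{lm20} --- it only refers the reader to \cite{Rado} --- so there is no in-paper argument to match yours against line by line; I can only assess your proposal on its merits, and it is correct and essentially complete. You use exactly the ingredients the paper supplies: Lemma \ref{lm16} drives the induction, forcing in the least-indexed missing vertex at each stage gives exhaustion, and the identification $X_{U_n\cup\{v\}}\cong L_n\cup(v\ast B)$ with $B=f_n^{-1}(\lk_X(v)\cap X_{U_n})$ is right. The combinatorial core $(\star)$ also checks out: $v\ast L$ is a genuine subdivision of the $(d+1)$-dimensional simplex; stellar subdivision at the simplexes $v\sigma$, $\sigma\in L\setminus B$, in decreasing order of dimension is well defined (each such $v\sigma$ is still present when its turn comes, since only strictly higher-dimensional simplexes were subdivided before it), destroys exactly the unwanted faces through $v$ (no face $v\tau$ with $\tau\in B$ can contain a subdivided $v\sigma$ because $B$ is closed under passage to faces), creates only simplexes through new barycenters, and never touches $L$; hence $L$ and $M$ are induced in the resulting triangulation $L'$, and $|L_n|$ sits in $|L_{n+1}|$ as the base face of the cone, which is the natural inclusion the statement needs. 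The one blemish is your separate treatment of $B=\emptyset$: ``coning a fresh apex over $L$ and adjoining $v$ off it'' does not, as literally stated, produce a triangulated ball (an isolated $v$ disconnects the space, and attaching $v$ by a single edge to the apex creates a whisker). Fortunately this special case is superfluous: your general procedure applied with $B=\emptyset$ subdivides every $v\sigma$, $\sigma\in L$, leaves $v$ adjacent only to barycenters, and yields a triangulated $(d+1)$-ball whose trace on $V(L)\cup\{v\}$ is $L\sqcup\{v\}$, exactly as required --- so you should simply delete the special case rather than patch it.
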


Note that the geometric realisation $|X|$ of a Rado complex $X$ (equipped with the weak topology)  does not satisfy the first axiom of countability and hence is not metrizable. This follows from the fact that $X$ is not locally finite. See \cite{Sp}, Theorem 3.2.8. 

The geometric realisation of a simplicial complex carries yet another natural topology, {\it the metric topology}, see \cite{Sp}, p. 111. 
The geometric realisation of $X$ with the metric topology is denoted $|X|_d$. While for finite simplicial complexes the spaces $|X|$ and 
$|X|_d$ are homeomorphic, it is not true for infinite complexes in general. For the Rado complex $X$ the spaces $|X|$ and $|X|_d$ are not homeomorphic. Moreover,  in general, the metric topology is not invariant under subdivisions, see \cite{Mine}, where this issue is discussed in detail. 

The Urysohn metric space \cite{V} is a well-known universal mathematical object;
it is intriguing to examine its relationship to the Rado simplicial complex. The Urysohn universal metric space $U$ is characterised (uniquely, up to isometry) by the following properties: (1) $U$ is complete and separable; (2) $U$ contains an isometric copy of every separable metric space; (3) every isometry between two finite subsets of $U$ can be extended to an isometry of $U$ onto itself. This looks similar to the characterisation of the Rado complex given by Theorem \ref{thm73}. 

V. Uspenskij \cite{U} proved that $\U$ is homeomorphic to the Hilbert space $\ell^2$. 

One may ask whether there exists a natural metric on the Rado complex $X$ turning it into a model for the Urysohn metric space $\U$? 
As a hint we may offer the following observation. The set of vertexes $V(X)$ of $X$ carries the following metric $\delta$: for $x, y\in V(X)$ with $x\not= y$ one sets $\delta(x, y)=1$ iff $x$ and $y$ are connected by an edge; otherwise\footnote{We remind the reader that in the Rado complex $X$ any two vertexes have a common neighbour, i.e. from any vertex $A$ one can get to any vertex $B$ jumping accross one or two edges.}  $\delta(x, y)=2$.
The obtained metric space $(V(X), \delta)$ is
an analogue of the Urysohn universal metric space restricted to countable metric spaces with distance functions taking 
values $1$ and $2$ only. Such metric spaces are in 1-1 correspondence with countable graphs, and our observation follows from the universality of the Rado graph, which is the 1-dimensional skeleton of the Rado complex $X$. 

\vskip 1cm
The author states that there is no conflict of interest. 

\section*{Appendix: \\
On the connectivity of conic complexes}
\begin{center}
by Jonathan Ariel Barmak
\footnote{Universidad de Buenos Aires. Facultad de Ciencias Exactas y Naturales. Departamento de Matem\'atica. Buenos Aires, Argentina. jbarmak@dm.uba.ar}
\end{center}
\vskip 0.5cm

Let $r \in \Z$. A simplicial complex $K$ is said to be $r$-conic if every subcomplex $L\leqslant K$ with at most $r$ vertices is contained in a simplicial cone, or, equivalently, in the closed star $\st_K (v)$ of a vertex $v\in K$. Note that the notions of $0$-conicity and $1$-conicity coincide and that they are equivalent to the complex being non-empty. Every complex is $r$-conic if $r\le -1$.

It was proved in \cite[Theorem 12]{Bar1}, that if a complex $K$ is $5^n$-conic, then it is $n$-connected. The argument uses a sequence $(L_k)_{k\in \N}$ of triangulations of $S^n$ together with a simplicial approximation $\varphi:L_k\to K$ and an idea that allows to reduce the number of vertices in $L_k$ and deform $\varphi$ to a homotopic map. On the other hand a join of $0$-dimensional spheres shows that a $(2n+1)$-conic complex may not be $n$-connected (\cite[Example 6]{Bar1}). The Nerve lemma \cite[Theorem 10.6]{Bjo} can be used to give a simple proof that $(2n+2)$-conicity already implies $n$-connectivity. The proof was given by Kahle in \cite[Theorem 3.1]{Kah} based on a similar result by Meshulam \cite[Proposition 3.1]{Mes}. In fact Meshulam's result deals only with homological connectivity, but gives better bounds under stronger hypotheses. Their results are stated for clique complexes, although the argument holds for general complexes with minor modifications. We state here the result in the general case and give a proof for future reference.

Let $K$ and $L$ be simplicial complexes. We denote by $K \circledast L$ the \textit{non-disjoint join}. It is the simplicial complex whose simplices are the unions $\sigma \cup \tau$ with $\sigma \in K$, $\tau \in L$, and also the simplices of $K$ and the simplices of $L$. If the vertex sets of $K$ and $L$ are disjoint, $K \circledast L$ coincides with the usual join $K *L$. Of course, $\circledast$ is commutative and associative.

\begin{ej}
Let $K$ be a simplicial complex, $L\le K$ a subcomplex and $v$ a vertex of $K$. Then $L \le \st_K (v)$ if and only if the non-disjoint cone $v\circledast L \le K$. Moreover, if $v_1, v_2, \ldots, v_t$ are $t$ vertices of $K$, then $L\le \bigcap \st_K(v_i)$ if and only if $\{v_1,v_2,\ldots, v_t\} \circledast L \le K$, where $\{v_1,v_2,\ldots, v_t\}$ denotes the discrete subcomplex.
\end{ej}

\begin{lema} \label{lema}
Let $K$ be an $r$-conic simplicial complex. Let $t\in \N$ and $v_1, v_2, \ldots, v_t\in K$. Then $S=\bigcap \st_K(v_i)\le K$ is $(r-t)$-conic. In particular, if $t\le r$, $S$ is non-empty.
\end{lema}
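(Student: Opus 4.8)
The plan is to unwind the definition of $(r-t)$-conicity and reduce it, via the non-disjoint join characterization of the Example, to a single application of the $r$-conicity of $K$. Concretely, I would fix an arbitrary subcomplex $L\le S$ with at most $r-t$ vertices and aim to produce one vertex $w\in S$ whose closed star in $S$ contains $L$; this is exactly what $(r-t)$-conicity of $S$ demands.

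The first step is a bookkeeping move. Since $L\le S=\bigcap_i \st_K(v_i)$, the Example gives that the non-disjoint join $L':=\{v_1,\dots,v_t\}\circledast L$ is a subcomplex of $K$. The reason for passing to $L'$ is the vertex count: its vertex set is $V(L)\cup\{v_1,\dots,v_t\}$, so $|V(L')|\le (r-t)+t=r$. This is the single place where the hypothesis $|V(L)|\le r-t$ is consumed, and it is precisely what makes the shift by $t$ appear. I would then apply the $r$-conicity of $K$ to $L'$ to obtain a vertex $w\in K$ with $L'\le \st_K(w)$.

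Two verifications then remain. For $w\in S$: each $v_i$ is a vertex of $L'\le \st_K(w)$, so $\{v_i,w\}\in K$ and hence $w\in \st_K(v_i)$ for every $i$, giving $w\in\bigcap_i \st_K(v_i)=S$. For $L\le \st_S(w)$: for any $\tau\in L$ and each $i$, the simplex $\tau\cup\{v_i\}$ lies in $L'\le \st_K(w)$, so $\tau\cup\{v_i\}\cup\{w\}\in K$; this says $\tau\cup\{w\}\in \st_K(v_i)$ for all $i$, i.e. $\tau\cup\{w\}\in S$, whence $\tau\in \st_S(w)$. Together these show $L\le \st_S(w)$ with $w$ a vertex of $S$, establishing that $S$ is $(r-t)$-conic.

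The main obstacle — really the only delicate point — is the second verification: one must ensure that the conic vertex $w$ produced by the $K$-conicity witnesses conicity \emph{inside} $S$, not merely inside $K$. This is exactly why the cone vertices $v_1,\dots,v_t$ are folded into $L'$ from the outset: their presence forces the enlarged simplices $\tau\cup\{v_i\}$ into $\st_K(w)$, which is precisely the information needed to return $\tau\cup\{w\}$ to $S$. Finally, the ``in particular'' clause is immediate: when $t\le r$ the complex $S$ is $(r-t)$-conic with $r-t\ge 0$, hence in particular $0$-conic, which by the remark preceding the lemma is equivalent to $S$ being non-empty.
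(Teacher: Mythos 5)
Your proof is correct and follows essentially the same route as the paper's: pass to the non-disjoint join $\{v_1,\dots,v_t\}\circledast L$, which has at most $r$ vertices, apply $r$-conicity of $K$, and then translate $v\circledast\{v_1,\dots,v_t\}\circledast L\le K$ back into $v\circledast L\le S$. Your version merely spells out the final translation (the memberships $\tau\cup\{v_i\}\cup\{w\}\in K$) more explicitly than the paper does.
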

\begin{proof}
Let $L\le S$ be a subcomplex of at most $r-t$ vertices. Then $\{v_1,v_2,\ldots, v_t\} \circledast L \le K$ has at most $r$ vertices and thus there exists $v\in K$ such that $\{v_1,v_2,\ldots, v_t\} \circledast L \in \st_K(v)$. In other words $v\circledast \{v_1,v_2,\ldots, v_t\} \circledast L \le K$. Then $v \circledast L \le S$. This means that $L \le \st_S(v)$.
\end{proof}

We recall the statement of the Nerve lemma \cite[Theorem 10.6]{Bjo}.

\begin{teo}[Nerve lemma] 
Let $K$ be a simplicial complex and $\{L_i\}_{i\in I}$ a family of subcomplexes covering $K$. Let $n\ge 0$. If each non-empty intersection $L_{i_1}\cap L_{i_2} \cap \ldots \cap L_{i_t}$ is $(n-t+1)$-connected for every $1\le t\le n+1$, then $K$ is $n$-connected if and only if the nerve $\mathcal{N}(\{L_i\}_{i\in I})$ is $n$-connected.
\end{teo}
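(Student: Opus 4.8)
The plan is to relate $K$ and the nerve $\mathcal N=\mathcal N(\{L_i\}_{i\in I})$ not by a direct map but through a common intermediary, the Mayer--Vietoris blow-up (homotopy colimit) $M$ of the diagram $\sigma\mapsto L_\sigma:=\bigcap_{i\in\sigma}L_i$ indexed by the non-empty simplices $\sigma\in\mathcal N$, with the inclusions $L_\sigma\hookrightarrow L_\tau$ whenever $\tau\subseteq\sigma$. Concretely one may take $M=\{(x,y)\in K\times|\mathcal N|\ :\ x\in L_{\mathrm{supp}(y)}\}$, where $\mathrm{supp}(y)$ denotes the carrier simplex of $y$. Since $n$-connectivity is detected by maps $S^i\to K$ with $i\le n$, each having compact image meeting only finitely many of the $L_i$, I would first reduce to the case $|I|<\infty$, so that $\mathcal N$ and $M$ are finite complexes.

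The space $M$ carries two projections, $p\colon M\to K$ and $q\colon M\to|\mathcal N|$. The first step is to show that $p$ is a homotopy equivalence, and this uses only that $\{L_i\}$ covers $K$ by subcomplexes, not the connectivity hypotheses. Indeed, the fibre $p^{-1}(x)$ consists of those $y$ with $\mathrm{supp}(y)\subseteq\{i:x\in L_i\}$, the realisation of a full simplex and hence contractible, and the standard fact that the projection from the blow-up of a closed cover by subcomplexes onto the covered space is a homotopy equivalence then applies.

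The second, and main, step is to show that $q$ induces isomorphisms on $\pi_i$ for all $i\le n$. Here I would filter $M$ by the preimages $q^{-1}(\mathcal N^{(d)})$ of the skeleta of $\mathcal N$; passing from level $d-1$ to level $d$ attaches, over each $d$-simplex $\sigma$, a block built from the fibre $L_\sigma$, which is a $(d+1)$-fold intersection and hence $(n-d)$-connected by hypothesis. The $d$ coming from the base and the $(n-d)$-connectivity of the fibre add up so that each such attachment leaves $\pi_i$ untouched for $i\le n$; assembling these contributions (via the Mayer--Vietoris/homotopy-colimit spectral sequence, or by induction on $d$ using van Kampen for $\pi_1$ together with Hurewicz and Mayer--Vietoris for the higher groups) shows that $q$ is an isomorphism on homotopy through dimension $n$. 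This propagation argument---verifying that the $(n-t+1)$-connectivity of the $t$-fold intersections is exactly calibrated to survive the skeletal filtration---is the technical heart and the step I expect to be the main obstacle. Combining the two steps, $K\xleftarrow{\ \simeq\ }M\xrightarrow{\ q\ }|\mathcal N|$ identifies the homotopy groups of $K$ and of $\mathcal N$ in degrees $\le n$, so $K$ is $n$-connected if and only if $\mathcal N$ is, as claimed.
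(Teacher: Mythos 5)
The paper does not actually prove this statement: it only \emph{recalls} the Nerve lemma and defers to Bj\"orner's Handbook survey (Theorem 10.6 there), so there is no in-paper proof to compare against. Your homotopy-colimit argument is precisely the standard proof behind that citation: the zigzag $K \xleftarrow{p} M \xrightarrow{q} |\mathcal N|$ through the Mayer--Vietoris blow-up, with the projection lemma (contractible fibres $p^{-1}(x)$, a full simplex on $\{i : x\in L_i\}$) giving that $p$ is a homotopy equivalence, and the skeletal induction over $|\mathcal N|$ giving that $q$ is an isomorphism on $\pi_i$ for $i\le n$. The connectivity bookkeeping is correctly calibrated: a $t$-fold intersection sits over a $(t-1)$-cell of the nerve and is $(n-t+1)=(n-\dim\sigma)$-connected, which is exactly what the skeletal attachment argument needs. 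The only caveat is the one you flag yourself: step two is left as an outline, and a complete write-up would have to carry out the van Kampen / Mayer--Vietoris / Hurewicz assembly (or invoke an off-the-shelf homotopy colimit comparison lemma); also, the reduction to finite $I$ should be phrased as a filtered-colimit argument on \emph{both} $K$ and $\mathcal N$ simultaneously, using naturality of the zigzag in the index set, since a compact set in one space does not directly control the other. Neither point is a missing idea --- both are standard --- so the proposal is correct in approach and agrees with the source the paper relies on.
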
    

\begin{teo}
Let $n\ge 0$. If a simplicial complex $K$ is $(2n+2)$-conic, then it is $n$-connected. 
\end{teo}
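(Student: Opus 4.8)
The plan is to argue by induction on $n\ge 0$, applying the Nerve lemma to the cover $\mathcal{C}=\{\st_K(v)\}_{v\in K}$ of $K$ by the closed stars of all its vertices. Since $(2n+2)$-conicity forces $K$ to be non-empty (as $2n+2\ge 2\ge 1$) and every simplex lies in the closed star of any of its vertices, $\mathcal{C}$ genuinely covers $K$, and each individual closed star is a cone, hence contractible. The nerve $\mathcal{N}=\mathcal{N}(\mathcal{C})$ has vertex set $V(K)$, and a finite set $\{v_1,\dots,v_t\}$ spans a simplex of $\mathcal{N}$ precisely when $\bigcap_i \st_K(v_i)\neq\emptyset$. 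To prepare the induction I would also record the monotonicity remark that an $r$-conic complex is automatically $r'$-conic for every $r'\le r$; the inductive hypothesis is then that a $(2m+2)$-conic complex is $m$-connected for every $m<n$.

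First I would verify the intersection hypothesis of the Nerve lemma for all $1\le t\le n+1$, namely that every non-empty $t$-fold intersection is $(n-t+1)$-connected. For $t=1$ a single closed star is contractible, hence certainly $n$-connected. For $t\ge 2$, Lemma \ref{lema} shows that any non-empty intersection $\bigcap_{i=1}^t \st_K(v_i)$ is $(2n+2-t)$-conic; since $2n+2-t\ge 2(n-t+1)+2$ exactly when $t\ge 2$, the monotonicity remark upgrades this to $(2(n-t+1)+2)$-conicity, and because $n-t+1<n$ the inductive hypothesis yields $(n-t+1)$-connectivity, which is precisely the bound required by the Nerve lemma. The tightness of this arithmetic is the reason the $t=1$ case cannot be obtained from the induction and must be handled separately by contractibility.

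Next I would show that $\mathcal{N}$ is itself $n$-connected. By the ``in particular'' clause of Lemma \ref{lema}, any family of at most $2n+2$ vertices of $K$ has non-empty common star intersection, so every subset of $V(K)$ of size at most $2n+2$ is a simplex of $\mathcal{N}$. Hence $\mathcal{N}$ contains the complete $(2n+1)$-skeleton of the full simplex on $V(K)$, and in particular its $(n+1)$-skeleton coincides with that of the simplex, since $n+1\le 2n+1$. As the complete $(2n+1)$-skeleton of a simplex is $2n$-connected and $n\le 2n$, and since $n$-connectivity depends only on the $(n+1)$-skeleton, it follows that $\mathcal{N}$ is $n$-connected. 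All hypotheses of the Nerve lemma now being in place, the lemma gives that $K$ is $n$-connected if and only if $\mathcal{N}$ is, and the previous step shows $\mathcal{N}$ is $n$-connected; this closes the induction.

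I expect the main obstacle to be the exact bookkeeping of the connectivity budget: the Nerve lemma demands $(n-t+1)$-connectivity of the $t$-fold intersections, and $(2n+2)$-conicity supplies exactly this with no slack, which is what forces both the separate contractibility treatment of $t=1$ and the strong-induction step for $t\ge 2$. A secondary point requiring care is the passage from ``$\mathcal{N}$ contains a complete high skeleton'' to ``$\mathcal{N}$ is $n$-connected'', where one must invoke that the homotopy groups in question are determined by the $(n+1)$-skeleton; once these two issues are organised, the remainder is a direct application of Lemma \ref{lema} and the Nerve lemma.
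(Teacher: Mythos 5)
Your argument is correct and is essentially the paper's own proof: the same cover by closed stars, the same use of Lemma \ref{lema} to get $(2n+2-t)$-conicity of $t$-fold intersections (with the identical arithmetic $2n+2-t\ge 2(n-t+1)+2$ for $t\ge 2$), the same induction on $n$ with the $t=1$ case handled by contractibility, and the same observation that the nerve has complete $(2n+1)$-skeleton and is therefore $2n$-connected. The only difference is that you spell out the monotonicity of conicity and the skeleton-dependence of $n$-connectivity, which the paper leaves implicit.
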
   
\begin{proof}
Let $\mathcal{U}=\{\st_K(v)\}_{v\in K}$. We claim that $\mathcal{N}(\mathcal{U})$ has complete $(2n+1)$-skeleton, that is any set of at most $2n+2$ vertices is a simplex. Indeed, the intersection of the closed stars of $t\le 2n+2$ vertices of $K$ is non-empty by Lemma \ref{lema}. In particular $\mathcal{N}(\mathcal{U})$ is $2n$-connected, so it is certainly $n$-connected.

By the Nerve lemma, it suffices to verify that for each $1\le t\le n+1$ and vertices $v_1,v_2,\ldots, v_t$, $S=\bigcap \st_K(v_i)$ is $(n-t+1)$-connected. For $t=1$ this is trivial. For $2\le t\le n+1$, $0\le n-t+1<n$ and by induction it suffices to check that $S$ is $(2n-2t+4)$-conic. But Lemma \ref{lema} says that $S$ is $(2n+2-t)$-conic, and since $t\ge 2$, $2n+2-t\ge 2n-2t+4$.
\end{proof}

\end{document}